\documentclass[12pt]{article}
\usepackage[margin=1in]{geometry}
\usepackage{authblk}
\usepackage{mkolar_definitions,subfigure,graphicx}

\begin{document}

\title{Detecting Anomalous Activity on Networks with\\ the Graph Fourier Scan Statistic\footnote{This research is supported in part by AFOSR under grant FA9550-10-1-0382, NSF under grants DMS-1223137 and IIS-1116458.}}
\author[1]{James Sharpnack}
\author[2]{Alessandro Rinaldo}
\author[2]{Aarti Singh}
\affil[1]{Department of Mathematics, University of California San Diego}
\affil[2]{Department of Statistics, Carnegie Mellon University} 
\affil[3]{Machine Learning Department, Carnegie Mellon University}

\maketitle

\begin{abstract}
We consider the problem of deciding, based on a single noisy measurement at each vertex of a given graph, whether the underlying unknown signal is constant over the graph or there exists a cluster of vertices with anomalous activation. This problem is relevant to several applications such as 
surveillance, disease outbreak detection, biomedical imaging, environmental monitoring, etc. 
Since the activations in these problems often tend to be localized to small groups of vertices in the graphs, we model such activity by a class of signals that are supported over a (possibly disconnected) cluster with low cut size relative to its size. 
We analyze the corresponding generalized likelihood ratio (GLR) statistics and relate it to the problem of finding a sparsest cut in the graph. We develop a tractable relaxation of the GLR statistic based on the combinatorial Laplacian of the graph, which we call the graph Fourier scan statistic, and analyze its properties. We show how its performance as a testing procedure depends directly on the spectrum of the graph, and use this result to explicitly derive its asymptotic properties on a few significant graph topologies. Finally, we demonstrate theoretically and with simulations that the graph Fourier scan statistic can outperform na\"{i}ve testing procedures based on global averaging and vertex-wise thresholding. 
We also demonstrate the usefulness of the GFSS by analyzing groundwater Arsenic concentrations from a U.S.~Geological Survey dataset.
\end{abstract}


\section{Introduction}


In this article, we will take a statistical approach to detecting signals that 
are localized over a graph. 
Signal detection on graphs is relevant in a variety of scientific areas, such as surveillance,  disease outbreak detection, biomedical imaging, detection using a sensor network, 
gene network analysis, environmental monitoring and malware detection over a computer network. 
Recently, the use of graphs to extend traditional methods of signal processing to irregular domains has been proposed \cite{shuman2013signal,sandryhaila2014discrete,
coifman2006diffusion,murtagh2007}.
While this work has largely focused on extending Fourier and wavelet analysis to graphs, little is known about the statistical efficiency of the recently proposed methodology.
We show that the Fourier transform over graphs, defined in \cite{hammond2011wavelets}, can be used to detect anomalous patterns over graphs by constructing the Graph Fourier Scan Statistic (GFSS), a novel statistic based on spectral graph theory.
We demonstrate the connection between the GFSS and the recently proposed Spectral Scan Statistic \cite{sharpnack2012changepoint}, and provide strong theoretical guarantees.

Throughout this work, we will assume that there is a known, fixed, undirected graph with $p$ vertices (denoted by the set $V = \{1,\ldots,p\}$), $m$ edges denoted by pairs $(i,j) \in E \subseteq V \times V$, and \smash{$p \times p$} weighted adjacency matrix \smash{$\Wb$} (where the weight \smash{$W_{i,j} = W_{j,i} \ge 0$} denotes the `strength' of the connection between vertices \smash{$(i,j) \in E$}).
Assume that we observe a single high-dimensional measurement $\yb$ over the graph, whereby for each vertex of the graph, \smash{$i \in V$}, we make a single, Gaussian-distributed observation \smash{$y_i$}.
In the context of sensor networks, the measurements $y_i$ are the values reported by each sensor, and the edge weights reflect beliefs about how similar the measurements of two sensors should be.
The measurements $y_i$ are noisy, and we are interested in determining if there is a region within the network where these observations are abnormally high.
Specifically, we are concerned with the basic but fundamental task of deciding whether there is a `cluster' of vertices within the graph, \smash{$C \subset V$}, such that in expectation the observation, \smash{$\EE[y_i]$}, is larger for \smash{$i \in C$} than for \smash{$i \notin C$}.
In Section II, we will define precisely our statistical framework, including the assumptions placed on the cluster \smash{$C$} and observations \smash{$\yb$} in relation to the graph.
In order to motivate the problem and introduce the GFSS, let us consider the following real data example.

\subsection{Arsenic Ground-water Concentrations in Idaho}

Ground-water contamination remains a serious issue globally, where aging infrastructure, shifting population densities, and climate change are among the contributing factors.
A study published in 1999, reports levels of Arsenic (As) contamination measured in 20,043 wells throughout the United States \cite{focazio1999retrospective}.
In order to illustrate the usefulness of the GFSS, we analyze the As concentration with the purpose of determining if there is a region that has elevated incidence of high As levels.
We will focus on the tested wells within Idaho, which was selected arbitrarily from the other US states.
We construct a graph between the wells, where each vertex is a tested well, by creating an edge between two vertices (wells) if either is the $k$th nearest neighbor of the other (See Figure \ref{fig0}).
For easy visualization, we subsampled the wells by randomly selecting 219 (roughly 10\%) of the 2,191 of the Idaho wells. 
We preprocessed the data by forming the indicator variable, $y_i$, which was $1$ if the measurement made at the $i$th well was greater than 10 ppm and $0$ otherwise (and we will denote the $p$ dimensional vector, $\yb$).
We then applied a standardization which we describe in Section VI.A.
The statistical problem that we address in this paper is testing if there is a well-connected set of wells, $C$, such that the measurements, $\yb$, are abnormally high within the active set $C$.

\begin{figure}[!htbp]
\centering
\mbox{
\subfigure{\includegraphics[width=2.1in]{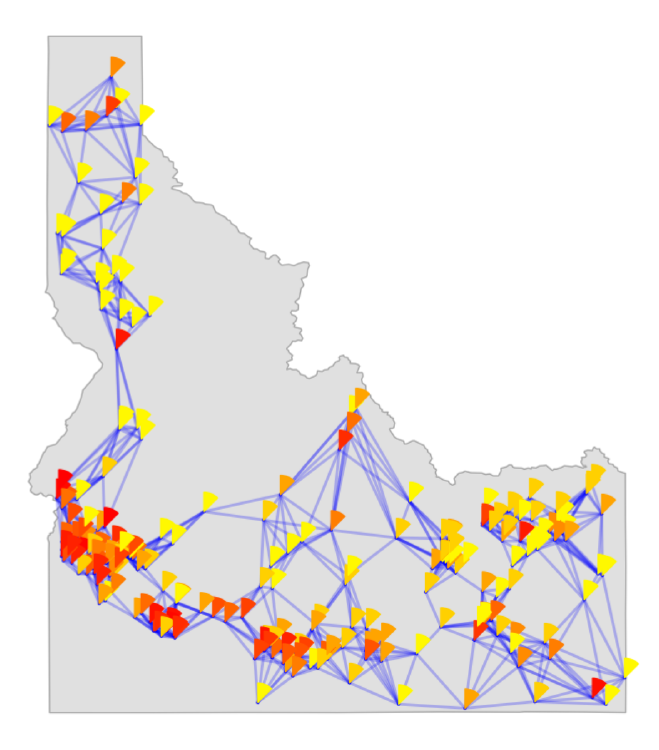}}
\hspace{1cm}
\subfigure{\includegraphics[width=2.3in]{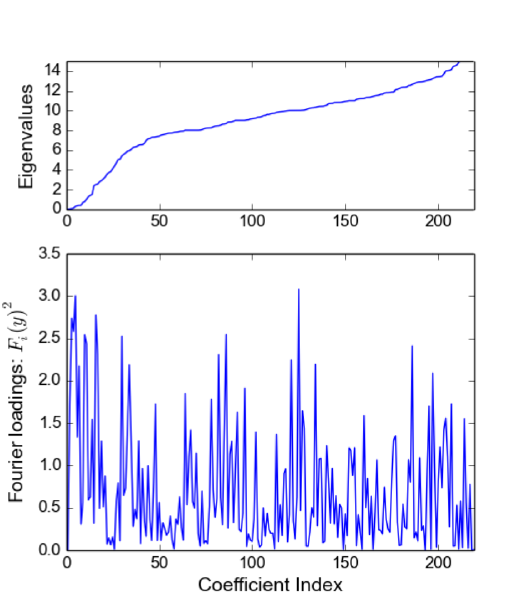}}
}
\caption{\small{\textbf{(Ground-water Arsenic Concentrations)} The As concentrations within Idaho after  (left) where red depicts higher concentrations and yellow depicts lower concentrations.  The ordered eigenvalues of the well network are plotted (top right) and the Fourier loadings \smash{$\{\Fcal_i(\yb)^2 = (\ub_i^\top \yb)^2\}_{i=1}^{219}$} are plotted (bottom right).  The index in the x-axis for the eigenvalues and Fourier loadings match so that the $i$th index corresponds to the pair $\lambda_i, \ub_i$.}}
\label{fig0}
\end{figure}

\subsection{Graph Fourier Scan Statistic}
Traditional statistical methods, such as wavelet denoising (i.e.~Haar and Daubechies wavelets) that employ the standard multi-resolution analysis (see \cite{hardle1998wavelets}) are not adapted to irregular domains and sensor distributions that are not grid-like.
With this in mind, a natural algorithm for the detection of such anomalous clusters of activity is the generalized likelihood ratio test (GLRT) (also known as the scan statistic or matched filter). 
Under a signal plus Gaussian noise model, this procedure entails scanning over all permitted clusters and hence is computationally very intensive. 
In \cite{sharpnack2012changepoint}, the Spectral Scan Statistic (SSS) was proposed as a relaxation of the combinatorial GLRT.
The statistical power of the detector, constructed by thresholding the SSS, was characterized using spectral graph theory.
In this paper, we propose another detector which is a low-pass filter based on the graph Fourier transform.
We will show that the resulting Graph Fourier Scan Statistic (GFSS), is in fact a further relaxation of the SSS, but because of its particular form it allows us to very precisely characterize its statistical power and construct an adaptive counterpart. 

We will begin by introducing a graph Fourier transform, which has been previously proposed in \cite{hammond2011wavelets} (but other transforms have been proposed, as in \cite{sandryhaila2014discrete}).
Through the graph Fourier transform, we will define the GFSS, which we introduce next.
Define the {\em combinatorial Laplacian} matrix \smash{$\Delta = \Db - \Wb$}, where \smash{$\Db = \diag\{ d_i \}_{i = 1}^p$} is the diagonal matrix of vertex degrees, \smash{$d_i = \sum_{j = 1}^p W_{i,j}$}.  
We will denote the eigenvalues and eigenvectors of $\Delta$ with \smash{$\{\lambda_i, \ub_i \}_{i = 1}^p$} respectively, where we order the eigenvalues in increasing order.
Hence, if $\Ub$ is the \smash{$p \times p$} matrix where the $i$th column is the eigenvector $\ub_i$ and $\Lambda = \diag\{ \lambda_i\}_{i=1}^p$ then we have 
\[
\Delta = \Ub \Lambda \Ub^\top.
\]
For the measurement vector $\yb$ over the vertices, the graph Fourier transform is \smash{$\Fcal(\yb) = \Ub^\top \yb$}.
Then the coordinate \smash{$\Fcal_i(\yb) = \ub_i^\top \yb$} for $i$ small are the low frequency components of $\yb$ and for $i$ large are the high frequency components.
In fact, the eigenbasis of the graph Laplacian is commonly used for statistical methods over graphs and point clouds in machine learning.
Much of this work has focused on dimension reduction and clustering \cite{belkin2001laplacian,ng2002spectral,balakrishnan2011noise}, there has been some work on using the Laplacian for regression and testing \cite{nilsson2007regression,sharpnack2010identifying}.
We demonstrate with the GFSS, and its theoretical analysis, another aspect of the Laplacian eigenbasis in a statistical context.

In order to construct the GFSS, consider a low-pass filter, $G$, that passes the low-frequency components of $\yb$ and attenuates (shrinks) the high-frequency components,
\[
G(\yb) = \sum_{i = 2}^p h(\lambda_i) (\ub_i^\top \yb) \ub_i, \quad h(\lambda_i) = \min\Big\{ 1, \sqrt{\frac{\rho}{\lambda_i}} \Big\},
\]
where $\rho > 0$ is a tuning parameter.
Because $\lambda_i$ is increasing in $i$ the attenuation factor, $h(\lambda_i)$, is $1$ for $i$ small enough and is non-increasing in $i$.
Then we {\em define the Graph Fourier Scan Statistic} as the energy of the attenuated signal with an adjustment for the amount of the attenuation (we let $\|.\|$ denote the $\ell_2$ norm),
\begin{equation}
\label{eq:GFSS}
\begin{aligned}
\hat t &= \| G(\yb) \|^2 - \sum_{i = 2}^p h(\lambda_i)^2 \\
&= \sum_{i = 1}^p \min\Big\{ 1, \frac{\rho}{\lambda_i} \Big\} \left[ (\ub_i^\top \yb)^2 - 1 \right].
\end{aligned}
\end{equation}
We will explain why the first eigenvector $\ub_1$ is ignored in Section IV (notice that the index of the sum begins at 2).
We should note here that for any graph Laplacian, $\lambda_1 = 0$ and $u_{1,i} = p^{-1/2}$ for all $i \in V$. 
If the GFSS is abnormally large then a large amount of the signal $\yb$ is in the low-frequency components.
We will see in Section IV.A that this occurs when there is a well-connected cluster $C$ of vertices that have an abnormally large signal.

In Figure \ref{fig0}, we have displayed the eigenvalues in increasing order and the squared graph Fourier coefficients (where the index of the eigenvalues matches the index of the coefficients), $\Fcal_i(\yb)^2$, for the Idaho As concentrations.
Because the linear filter $G(\yb)$ focuses the sensing energy on the low frequency components, the GFSS will be high if the Fourier loadings $\Fcal_i(\yb)^2$ are large for smaller $i$.

By forming a k-nearest neighbor (kNN) graph over all 2,191 wells in Idaho with $k = 8$ and applying the GFSS with $\rho = \lambda_{109}$ (the $109$th smallest eigenvalue, which was selected simply because $109 = \lfloor 0.05 (2191) \rfloor$).
The GFSS statistic evaluates to $697.1$ and we can obtain a P-value $< 10^{-5}$ by a permutation test (explained in Section VI.A).
This indicates that we can be confident that the probability of obtaining a high As measurement is non-constant throughout the graph. 

Recall that we also subsampled the well measurements, to form a kNN graph ($k=8$) over 219 wells (as shown in Figure \ref{fig0}).
By selecting $\rho = \lambda_{10}$ which is selected by the same rule as before ($10 = \lfloor 0.05 (219) \rfloor$), the GFSS also obtains a P-value $< 10^{-5}$.
So, despite the fact that we used $10\%$ of the samples in this dataset, we can still conclude with confidence that the signal is not identically distributed over the graph.
With this knowledge, targeted ground-water treatment could be recommended and further statistical analysis for locating the contamination would be warranted.
After we make a thorough case for the GFSS from a theoretical perspective, we will return to the As detection example in Section VI.A.



\subsection{Related Work}

The problem of statistical hypothesis testing of 
graph-structured activation has received some attention recently. 
The GLRT for graphs, also known as the graph scan statistic, is discussed in \cite{priebe_graphscan}. 
Theoretical properties of the GLRT for some specific topologies and 
specific signal classes have also been derived, e.g.
detecting an interval in a line graph or geometric
shapes such as rectangles, disks or ellipses in a lattice graph
\cite{castro:05}, path of activation in a tree or lattice \cite{arias2008searching}, 
or nonparametric
shapes in a lattice graph \cite{arias2011detection}. In these settings, scanning over
the entire signal class or over an epsilon-net for the signal 
class is often feasible and has been shown to have near-optimal
statistical performance. However, for general graphs and signal classes 
these detectors are infeasible, either because the scan
involves too many patterns or due to lack of constructive
ways to obtain an epsilon-net. While there has been some
work on developing fast graph subset scanning methods \cite{daniel_graphscan},
these greedy methods sacrifice statistical power. Also, there is 
work on developing Fourier basis and wavelets for graphs (c.f. \cite{shuman2013signal} and references therein),
which can potentially serve as an epsilon-net, however the 
approximation properties of such basis are not well characterized. 
An exception is \cite{sharpnack2012detecting} where graph wavelets were constructed using a 
spanning tree and statistical properties of the corresponding wavelet 
detector have been characterized. 
In \cite{addario2010combinatorial}, 
the authors consider the
complete graph and study detection under some combinatorial
classes such as signals supported over cliques, bi-cliques, and spanning trees. 
They establish lower bounds on the
performance of any detector, and provide upper bounds for
some simple but sub-optimal detectors such as averaging all
node observations and thresholding. 


We build on our previous findings in \cite{sharpnack2012changepoint} where the Spectral Scan Statistic was proposed as a convex spectral relaxation of the GLRT and characterize its statistical performance. 
In another recent work \cite{sharpnack2013submod}, we have also developed a different convex relaxation of the GLRT using Lovasz extension and characterized its properties for detecting graph-structured signals. 
A comparison of our prior work \cite{sharpnack2012detecting,sharpnack2012changepoint,sharpnack2013submod} appears in \cite{sharpnackGlobalSIP13}.  
Despite the empirical success of the SSS in \cite{sharpnack2012changepoint}, the statistical guarantees made are in some cases dominated by the guarantees obtained for the energy test statistic (to be introduces in Section III.B) which does not take the graph structure into account.
The GFSS attains superior theoretical performance which always outperforms the energy statistic (except in cases in which the graph structure is misleading).
Moreover, because the GFSS is formed by attenuating high frequency components via the graph Fourier transform (as in \cite{hammond2011wavelets}), this paper provides a statistical justification for the use of the combinatorial Laplacian to derive a graph Fourier analysis.
Furthermore, the SSS requires perfect knowledge of the tuning parameter $\rho$, which is not known in general.
To this end, we form the adaptive GFSS, which automatically selects $\rho$.
In practice, the adaptive GFSS significantly outperforms the GFSS with a heuristic choice of $\rho$.
The GFSS also may be preferable to more complicated procedures because it is based on a linear filter of the measurements $\yb$, which in some computational settings may be advantageous.

\subsection{Contributions}

Our contributions are as follows. 
(1) We examine a new alternative hypothesis, which we call the graph-structured $H_1$, which generalizes the piece-wise constant graph-structured $H_1$ proposed in \cite{sharpnack2012changepoint}. 
(2) Following the derivation of the SSS in \cite{sharpnack2012changepoint}, we show the relationship between the GFSS, SSS, and GLRT.  
(3) In our main theoretical result, we show that the performance of the GFSS depends explicitly on the spectral properties of the graph. 
(4) Because the GFSS requires the specification of the tuning parameter, $\rho$, we develop an adaptive version of the GFSS that automatically selects $\rho$.
We extend our theory to this test.
(5) Using such results we are able to characterize in a very explicit form the performance of the GFSS on a few notable graph topologies and demonstrate its superiority over detectors that do not take into account the graph structure.
(6) We demonstrate the usefulness of the GFSS with the partially simulated Arsenic concentration dataset.

\section{Problem Setup}

Detection involves the fundamental statistical question: are we observing merely noise or is there some signal amidst this noise?
While the As contamination example in Section I.A involves binary measurements, for ease of presentation, we will work with Gaussian measurements with the understanding that many of the results derived may be extended easily to binary observations via subGaussian theory.
We begin by outlining the basic problem of detecting a signal in Gaussian noise, then we will dive into graph-structured signals and the corresponding detection problem.
First we begin with the Gaussian sequence space model, in which we make {\bf one} observation at each node of the graph, yielding a vector $\yb \in\RR^p$ which is modeled as
\begin{equation}\label{eq:model}
\yb = \xb + \epsilonb,
\end{equation}
where $\xb \in \RR^p$ is the unknown signal and $\epsilonb \sim N(0,\sigma^2 \Ib_p)$ is Gaussian noise, with $\sigma^2$ known. 
While the basic detection problem is to determine whether the signal $\xb$ is constant or not, we analyze three different alternative hypotheses: unstructured signal, piece-wise constant graph-structured signal, and the more general graph-structured signal with differential activation.

\emph{Unstructured $H_1$.} In order to make clear the improvements that the graph-structure affords us, we will consider the testing problem without any such structure.  
Throughout this paper we will also let $\one = (1,\ldots,1)$ denote the all $1$s vector, and $\bar \xb = n^{-1} (\sum_{i=1}^p x_i) \one$.
We define the class of unstructured signals, parametrized by a signal strength parameter $\mu > 0$, as $\Xcal_U(\mu) = \{ \xb \in \RR^p : \| \xb - \bar \xb \| \ge \mu \}$, the complement of the open ball of radius $\mu$ in the subspace orthogonal to $\one$.
We consider only the subspace orthogonal to $\one$ because the null is invariant under changes of $\xb$ within this space.
Then the basic `unstructured' hypothesis testing problem is,
\[
H_0: \xb = \bar \xb \textrm{\quad v.s. \quad} H_1^U : \xb \in \Xcal_U(\mu)
\]
Because there is no a priori structure, in this setting we are compelled to use tests that are invariant under arbitrary permutation of the coordinates of $\yb$ and $\xb$.
We will now outline the graph-structured alternative hypotheses. 

\emph{Piece-wise constant graph-structured $H_1$.} 
Following \cite{sharpnack2012changepoint}, will assume that there are two regions of constant signal for $\xb$, namely that there exists a (possibly disconnected) subset $C \subset V$ ($C \notin \{\emptyset, V\}$) such that $\xb$ is constant within both $C$ and its complement $\bar C = V \backslash C$.
We consider the class of signals that are piece-wise constant on $C$ and $\bar C$, 
and parametrized by a signal strength parameter $\mu$, defined as 
\[
\Xcal_{PC}(\mu,C) = \{ \xb = \alpha \one + \delta \one_C : \alpha, \delta \in \RR \} \cap \Xcal_U(\mu)
\]
where $\one_C \in \RR^p$ such that $\one_{C,i} = 1$ if $i \in C$ and $0$ otherwise.  
The parameter $\alpha$ can be thought of as the magnitude of the background signal and is a nuisance parameter, while $\delta$ quantifies the gap in signal between the two clusters.
For the signal $\xb = \alpha \one + \delta \one_C$ to be contained in $\Xcal_U(\mu)$, it is required that $\| \xb - \bar \xb\| = \sqrt{\frac{|C||\bar C|}{p}} |\delta| \ge \mu$.

We will not assume any knowledge of the true cluster $C$, other than that it belongs to a given class $\Ccal \subset 2^V$ that we define next. This class provides a good model for activations that are localized on 
the graph as we will see. 
Formally, we define, for some $\rho > 0$ (which is the same $\rho$ in the definition of the GFSS),
\begin{equation}
\label{eqn:Cclass}
\Ccal = \Ccal(\rho) =  \left\{C \subset V, C \neq \emptyset \colon \frac{ \Wb(\partial C) }{ |C||\bar C|} \le \frac {\rho} {p} \right\},
\end{equation}
where $\partial C = \{(i,j) \in E: i \in C, j \in \bar C \}$ is the boundary of $C$ and $\Wb(\partial C) = \sum_{(i,j) \in \partial C} W_{i,j}$. 
Note that $\Ccal$ is a symmetric class in the sense that $C \in \Ccal$ if and only if $\bar C \in \Ccal$. 
The quantity $\frac{p \Wb(\partial C) }{ |C||\bar C|}$ is known in the graph theory literature as the {\bf cut sparsity} \cite{vazirani2001approximation} and is equivalent, up to factor of $2$, to the {\bf cut expansion} ($\frac{\Wb(\partial C)}{\min\{|C|,|\bar C|\}}$):
\[
\frac{\Wb(\partial C)}{\min\{|C|,|\bar C|\}} \le \frac{p \Wb(\partial C) }{ |C||\bar C|} \le 2 \frac{\Wb(\partial C)}{\min\{|C|,|\bar C|\}}
\]
The cut expansion of a vertex set $C$ is a measure of the size of the boundary relative to the size of $C$. Notice that for the same size of activation, a signal that is localized to a group of well connected nodes on the graph has a smaller cut sparsity and cut expansion than a signal which is distributed over isolated nodes. 
Thus, the class provides a good model for signals that are localized over the graph. 
Note that this definition is much less restrictive than existing work, e.g. \cite{castro:05} considers intervals, rectangles, ellipses, and similar geometrical shapes, or \cite{addario2010combinatorial} considers cliques, stars, spanning trees.
The only other work which considers general non-parametric shapes is \cite{arias2011detection}, 
however it only considers lattice graphs and it is not clear how to extend the signal class 
definition used in that work to general graphs.

Define the class of all piece-wise constant signals with signal strength $\mu$ and cut sparsity bounded by $\rho$ as $\Xcal_{PC}(\mu,\rho) = \cup_{C \in \Ccal(\rho)} \Xcal_{PC}(\mu,C)$.
Then we will consider the hypothesis testing problem,
\[
H_0: \xb = \bar \xb \textrm{\quad v.s. \quad} H_1^{PC} : \xb \in \Xcal_{PC}(\mu,\rho).
\]
Because the piece-wise constant assumption may be restrictive, we will endeavor to generalize it to a larger alternative space.

\emph{Graph-structured $H_1$.} We now consider a more general form of alternative, in which the signal is graph-structured, but not necessarily constant over clusters of activation.
Specifically, we will assume that there is again a true cluster $C \in \Ccal(\rho)$ within which the signal differs little and across which the signal differs highly.
Formally, we define the class of graph-structured signals as 
\[
\Xcal_{S}(\mu,\rho) = \left\{ \xb \in \RR^p : \left| \frac{\one_C^\top \xb}{|C|} - \frac{\one_{\bar C}^\top \xb}{|\bar C|} \right| \sqrt{\frac{|C||\bar C|}{p}} \ge \mu, C \in \Ccal \right\}.
\]
Notice that if $\xb \in \Xcal_S(\mu,\rho)$ then $\| \xb - \bar \xb \| \ge \mu$, so $\Xcal_S(\mu,\rho) \subset \Xcal_U(\mu)$.
Furthermore, if $\xb = \alpha \one + \delta \one_C \in \Xcal_{PC} (\mu, \rho)$ then
\[
\left| \frac{\one_C^\top \xb}{|C|} - \frac{\one_{\bar C}^\top \xb}{|\bar C|} \right| = |\delta|.
\]
Hence, $\Xcal_{PC}(\mu,\rho) \subset \Xcal_S(\mu,\rho)$.
This induces the following hypothesis testing problem,
\[
H_0: \xb = \bar \xb \textrm{\quad v.s. \quad} H_1^S : \xb \in \Xcal_S(\mu,\rho).
\]
Whenever possible we will make statements about this non-constant alternative, for the sake of generality.
We outline in Section VI.E how signals in this class may arise by subsampling vertices of the cluster $C$.

\subsection{Distinguishability of $H_0$ and $H_1$}

We will analyze asymptotic conditions under which the hypothesis testing problems described above are statistically feasible, in a sense made precise in the next definition.
We will assume that the size of the graph $p$ increases and the relevant parameters of the model, $\mu$, $\sigma$, $\rho$, and eigen-spectrum of $\Delta$, change with $p$ as well, even though we will not make such dependence explicit in our notation for ease of readability.  
Our results establish conditions for asymptotic disinguishability as a function of the SNR $\mu/\sigma$, $\rho$, and the spectrum of the graph.

\begin{definition}
For a given statistic $s({\bf y})$ and threshold $\tau \in \RR$, let $T = T({\bf y})$ be $1$ if $s({\bf y}) > \tau$ and $0$ otherwise. 
Recall that $H_0$ and $H_1$ index sets of probability measures by which $\yb$ may be distributed.
We say that the hypotheses $H_0$ and $H_1$ are {\bf asymptotically distinguished by the test} $T$ if
\begin{equation}
    \label{eqn:asymp_dist}
  \sup_{\PP_0 \in H_0} \PP_0 \{ T=1 \} \rightarrow 0 \quad \textrm{ and } \quad \sup_{\PP_1 \in H_1} \PP_1 \{ T=0 \} \rightarrow 0, 
  \end{equation}
  where the limit is taken as $p \rightarrow \infty$.
We say that $H_0$ and $H_1$ are {\bf asymptotically indistinguishable} if there does not exist any test for which the above limits hold.
Furthermore, we say that a sequence $\{r_p\}_{p=1}^\infty$ is a {\bf critical SNR}, if for $\mu / \sigma = o(r_p)$, $H_0$ and $H_1$ are asymptotically indistinguishable and for $\mu / \sigma = \omega(r_p)$, $H_0$ and $H_1$ are asymptotically distinguishable. (We denote this with $\mu / \sigma \asymp r_p$.)
\end{definition}


In Section III.A we will produce a lower bound on the critical SNR, and in Section IV we will derive conditions under which the GFSS asymptotically distinguishes $H_0$ from $H_1^{PC}$ and $H_1^S$.
We will say that a test is {\bf adaptive} if it can be performed without knowledge of $\rho$.
Naturally, requiring adaptivity may inhibit the quality of our test, as it has in the detection within Sobolov-type ellipsoids \cite{spokoiny1996adaptive,ji2012sharp}.
We will modify the GFSS to make it adaptive and prove theory regarding its performance.

\section{A Lower Bound and Classical Results}

Our ultimate goal is to give a theory of activity detection on general graphs.
This means that our theorems should apply to all graph structures with only minor, simplifying assumptions, such as connectedness.
But as a validation of the theory and methods that we propose, we will pay particular attention to the implications of our results on specific graph structures.
We begin by introducing the torus graph structure that will serve as a running example for illustrations. 
The reader should in no way interpret this to mean that our results necessarily depend on the idiosyncrasies of the torus graph, such as edge transitivity.

\begin{example}{(Torus Graph)} A torus graph is a lattice or two-dimensional grid that is 
wrapped around so that rightmost vertices are same as leftmost vertices, and topmost 
vertices are same as bottom vertices. Formally, 
the $\ell \times \ell$ torus graph ($p = \ell^2$) is defined as follows. Let the vertex set $V = (\ZZ \mod \ell)^2$ where points $(i_1,i_2),(j_1,j_2)$ are connected by an edge if and only if $|(i_1 - j_1) \mod \ell| + |(i_2 - j_2) \mod \ell| = 1$ (here $|i \mod \ell|$ means the smallest absolute value of representatives).
The class of clusters in the torus under consideration $\Ccal(\rho)$ are those that have sparsity $p \Wb(\partial C) / (|C||\bar C|) \le \rho$.
For example, rectangles of size $k \times k$ within the torus have cut sparsity $4kp / (k^2 (p-k^2)) \asymp 4/k$.
This means that if we would like to include rectangles of size roughly $k \times k$, it is sufficient that $\rho \asymp 4/k$. 
\end{example}

In order to understand the fundamental limitations of the activity detection problem, we review lower bounds on the performance of any testing procedure.
After this information theoretic result, we will review classical theory about the detection of non-zero means under no graph constraints.

\subsection{Lower Bound}

The following lower bound on the critical SNR was derived in \cite{sharpnack2012changepoint}.
The first part is a simple bound on the performance of the oracle (who has knowledge of the active cluster, $C$) based on the Neyman-Pearson lemma.
The second part is more sophisticated and requires that the graph has symmetries that we can exploit, but it will not be satisfied by many graphs.
We will later show that these conditions are satisfied by the specific graph structures that we will analyze in Section VI.

\begin{theorem}{\cite{sharpnack2012changepoint}}
\label{thm:lower_bd}
(a) $H_0$ and $H_1^{PC}, H_1^S$ are asymptotically indistinguishable if $\mu / \sigma = o(1)$. \newline
(b) Suppose that there is a subset of clusters $\Ccal' \subseteq 2^V$ such that all the elements of $\Ccal'$ are disjoint, of the same size ($|C| = c$ for all $C \in \Ccal'$), and
\[
\forall C \in \Ccal', \quad \frac{p \Wb(\partial C)}{|C||\bar C|} \le \frac {\rho}{2}
\]
i.e., elements of $\Ccal'$ belong to the alternative hypothesis with $\rho / 2$ cut sparsity.
Furthermore assume that $\frac{c|\Ccal'|}{p} \rightarrow 1$.
$H_0$ and $H_1^{PC}, H_1^S$ are asymptotically indistinguishable if
\[
\frac \mu \sigma = o(|\Ccal'|^{1/4})
\]
\end{theorem}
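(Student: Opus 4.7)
The plan is to reduce to an oracle test and invoke Neyman–Pearson. Fix any single alternative $\xb^* \in \Xcal_{PC}(\mu,\rho)$ with $\|\xb^* - \bar\xb^*\| = \mu$, and consider the simple-vs-simple problem of testing $\xb = \bar\xb^*$ against $\xb = \xb^*$ with everything else (cluster identity, $\sigma$, $\rho$) revealed. The most powerful test for this reduced problem is the likelihood ratio, which thresholds the sufficient statistic $\vb^\top \yb / \sigma$ with $\vb = (\xb^* - \bar\xb^*)/\|\xb^* - \bar\xb^*\|$. Under $H_0$ this statistic is $N(0,1)$ and under the alternative it is $N(\mu/\sigma, 1)$, so the power at level $\alpha$ is $\Phi(\mu/\sigma - z_\alpha)$. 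Since the oracle test upper-bounds the power of any composite test, whenever $\mu/\sigma = o(1)$ the power of every test tends to the size $\alpha$, and no test can simultaneously drive type I and type II errors to zero. Because $\Xcal_{PC}(\mu,\rho) \subseteq \Xcal_S(\mu,\rho)$, the same bound applies to $H_1^{PC}$ and $H_1^S$.

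\paragraph{Plan for Part (b).} I would use Le Cam's two-point (mixture) method. Put a uniform prior $\pi$ on $\Ccal'$ and, for each $C \in \Ccal'$, take $\xb_C = \delta\,(\one_C - (c/p)\one)$ with $\delta = \mu \sqrt{p/(c(p-c))}$ so that $\|\xb_C - \bar\xb_C\| = \mu$. By the hypothesis on $\Ccal'$, each such $\xb_C$ lies in $\Xcal_{PC}(\mu,\rho) \subseteq \Xcal_S(\mu,\rho)$, so it suffices to show the mixture $\PP_\pi = |\Ccal'|^{-1}\sum_{C \in \Ccal'} N(\xb_C, \sigma^2 \Ib_p)$ and $\PP_0 = N(0, \sigma^2 \Ib_p)$ cannot be distinguished. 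I would control the chi-squared divergence between $\PP_\pi$ and $\PP_0$:
\[
\chi^2(\PP_\pi,\PP_0) \;=\; \frac{1}{|\Ccal'|^2}\sum_{C,C' \in \Ccal'} \exp\!\Big(\tfrac{1}{\sigma^2}\xb_C^\top \xb_{C'}\Big) - 1.
\]
Exploiting the fact that the clusters in $\Ccal'$ are disjoint and of common size $c$, one computes $\xb_C^\top \xb_{C'} = -\delta^2 c^2/p$ for $C \neq C'$ and $\|\xb_C\|^2 = \mu^2$, so together with $c|\Ccal'|/p \to 1$ the off-diagonal exponent vanishes and the divergence is driven by the diagonal.

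\paragraph{The key step.} The main obstacle is extracting the precise rate $|\Ccal'|^{1/4}$ from this expression, because a naive bound $\exp(\mu^2/\sigma^2)/|\Ccal'|$ only yields a $\sqrt{\log|\Ccal'|}$ threshold. The trick is to apply a second-moment argument by expanding the diagonal exponential to quadratic order, which is justified in the regime $\mu/\sigma = o(|\Ccal'|^{1/4})$: then $\mu^2/\sigma^2 = o(|\Ccal'|^{1/2})$ is small on the scale of the denominator, and one retains only the leading $(\mu^2/\sigma^2)^2/|\Ccal'|$ term (carefully truncating the Gaussian tail so the expansion is valid). This gives $\chi^2(\PP_\pi,\PP_0) = o(1)$, whence the total variation $d_{TV}(\PP_\pi, \PP_0) \le \tfrac12\sqrt{\chi^2} \to 0$, and by Le Cam's inequality the sum of type I and type II errors of any test is at least $1 - d_{TV} \to 1$, establishing asymptotic indistinguishability.

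\paragraph{Putting it together.} Part (a) handles the low-SNR regime regardless of graph structure; Part (b) exploits the existence of a large symmetric sub-class $\Ccal'$ to sharpen the lower bound. Together they pin the critical SNR for $H_1^{PC}$ and $H_1^S$ from below by $\max\{1,\,|\Ccal'|^{1/4}\}$, which will later be compared to the upper bound achieved by the GFSS.
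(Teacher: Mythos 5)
Your part (a) is essentially correct and coincides with the intended argument (the paper imports this theorem from \cite{sharpnack2012changepoint} and describes part (a) exactly as an oracle Neyman--Pearson bound): reducing to a simple-vs-simple problem at a revealed cluster and noting that the two Gaussians merge in total variation when $\mu/\sigma = o(1)$ is the right proof.

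Part (b), however, has a genuine gap, and it occurs \emph{before} your ``key step'': the prior itself is wrong. With a uniform prior over the $N = |\Ccal'|$ \emph{single} clusters, the mixture $\PP_\pi$ is genuinely distinguishable from $\PP_0$ far below the $N^{1/4}$ threshold: averaging $\yb$ over each block $C \in \Ccal'$ produces $N$ essentially independent Gaussian block statistics, and the scan (maximum) over these blocks succeeds as soon as $\mu/\sigma \gg \sqrt{\log N}$. Hence in the regime $\sqrt{\log N} \ll \mu/\sigma \ll N^{1/4}$ the total variation distance for your mixture tends to $1$, not $0$, and no truncation or Taylor expansion can rescue the computation. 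Concretely, your diagonal contribution is $N^{-1} e^{\mu^2/\sigma^2}$, which is $o(1)$ only when $\mu^2/\sigma^2 \le (1-\epsilon)\log N$; replacing $e^{\mu^2/\sigma^2}$ by its quadratic expansion because $\mu^2/\sigma^2 = o(N^{1/2})$ is not a valid bound, and the failure is not an artifact of the $\chi^2$ method --- the statement you would be proving is false for that mixture.

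The missing idea is that $\Ccal(\rho)$ admits \emph{disconnected} clusters, which is exactly what the paper's surrounding discussion emphasizes (detecting multi-component activations is harder than the single-component setting of \cite{arias2011detection}). The hard prior takes $C$ to be the union of a random subset of $\Ccal'$, e.g.\ including each element independently with probability $1/2$. This explains the two hypotheses you did not use: requiring each element of $\Ccal'$ to have cut sparsity $\rho/2$ leaves a factor-$2$ budget so the random union still lies in $\Ccal(\rho)$, and $c|\Ccal'|/p \to 1$ guarantees the complement of the union retains order $p/2$ vertices so the sparsity does not blow up. With per-block signal-to-noise $a = \delta \sqrt{c}/\sigma$ and total $\mu^2/\sigma^2 \asymp N a^2$, the product (Bernoulli) structure of the prior factorizes the $\chi^2$ divergence across blocks; after centering (the signals are orthogonal to $\one$, killing the linear terms), the exponent is driven by $N a^4$, so
\[
\chi^2(\PP_\pi,\PP_0) = o(1) \iff a = o(N^{-1/4}) \iff \frac{\mu}{\sigma} = o(N^{1/4}),
\]
with no expansion tricks required. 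This matches the dense normal-mixture detection boundary and yields the claimed $|\Ccal'|^{1/4}$ rate; your construction can only ever certify indistinguishability up to $\sqrt{\log |\Ccal'|}$.
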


We illustrate the usefulness of the lower bound with the following example.
\begin{example}{(Lower Bound for Torus)}
We will construct $\Ccal'$ in Theorem \ref{thm:lower_bd} (b) from disjoint squares of size a constant multiple of $p^{1 - \beta}$, making 
$|\Ccal'|\asymp p^\beta$.
Thus, the critical SNR for $H_0$ versus any of $H_1^{PC}, H_1^S$ for any estimator is greater than $p^{\beta / 4}$. 
\end{example}

In \cite{arias2011detection}, the authors also derive a lower bound which scales as $\sqrt{\log (p/|C|)}$ for detection of
patterns on the lattice graph which include squares of size $|C|$. However, their results 
only hold for clusters consisting of a single connected component, whereas $H_1^{PC}$ allows for 
multiple connected components. Thus, our results indicate that detecting clusters 
with multiple 
connected components is harder than detecting 
a cluster with a single connected component, unless the connected component is really large
i.e. the activation size $|C|$ is of the same order as the graph size $p$. In the latter case, 
both the bound of \cite{arias2011detection} and our result imply an SNR of 
$o(1)$ is insufficient for detection on the torus graph. 

The scaling with the $1/4$th power in our results is not a coincidence.
We will see that the classical results for the unconstrained alternative hypotheses also provide a critical SNR of this form.

\subsection{Classical Results}

In order to understand the inherent difficulty of distinguishing $H_0$ from the unstructured alternative, $H_1^U$, we will recount a result from \cite{ingster2003nonparametric}.

\begin{theorem}
The critical SNR for any test distinguishing $H_0$ from $H_1^U$ is given by,
\[
\frac \mu \sigma \asymp p^{1/4}
\]
and it is achieved by the energy test statistic $\| \yb - \bar \yb \|_2^2$, when it is thresholded at quantiles of the $\chi^2_{p-1}$ distribution.
\end{theorem}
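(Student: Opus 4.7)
The plan is to establish both halves of the asymptotic equivalence $\mu/\sigma \asymp p^{1/4}$: (i) the energy statistic thresholded at a $\chi^2_{p-1}$ quantile achieves detection whenever $\mu/\sigma = \omega(p^{1/4})$, and (ii) no test whatsoever succeeds when $\mu/\sigma = o(p^{1/4})$.

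For (i) I would directly analyze $\hat s := \|\yb - \bar\yb\|^2/\sigma^2$ under both hypotheses. Under $H_0$, $\yb - \bar\yb = \epsilonb - \bar\epsilonb$ is the projection of Gaussian noise onto $\one^\perp$, so $\hat s \sim \chi^2_{p-1}$ with mean $p-1$ and variance $2(p-1)$. Under $H_1^U$, the decomposition $\yb - \bar\yb = (\xb - \bar\xb) + (\epsilonb - \bar\epsilonb)$ makes $\hat s$ a non-central $\chi^2_{p-1}(\lambda)$ with $\lambda = \|\xb - \bar\xb\|^2/\sigma^2 \ge \mu^2/\sigma^2$, so $\EE_1[\hat s] \ge (p-1) + \mu^2/\sigma^2$ and $\mathrm{Var}_1[\hat s] \le 2(p-1) + 4\mu^2/\sigma^2$. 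Setting the threshold $\tau_p = (p-1) + c_p\sqrt{p-1}$ for some $c_p \to \infty$ slowly drives the type-I error to $0$ by Chebyshev, and the type-II error also vanishes provided $\mu^2/\sigma^2$ exceeds both $c_p\sqrt{p-1}$ and the alternative's standard deviation---equivalently, provided $\mu/\sigma \gg p^{1/4}$.

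For (ii) I would apply the Ingster--Suslina chi-squared method with a Gaussian prior. Write $\Pi := \Ib_p - p^{-1}\one\one^\top$ for the projector onto $\one^\perp$, and impose the prior $\xb \sim N(0, \gamma^2 \Pi)$ with $\gamma^2 = \mu^2/(p-1)$; let $\PP_\pi$ denote the induced marginal law of $\yb$. By Le Cam's lemma it suffices to show $\chi^2(\PP_\pi, \PP_0) \to 0$. Taking independent copies $\xb,\xb'$ from the prior and unrolling the likelihood ratio yields
\begin{equation*}
\chi^2(\PP_\pi, \PP_0) + 1 \;=\; \EE\!\left[\exp\!\left(\frac{\langle \xb, \xb'\rangle}{\sigma^2}\right)\right] \;=\; \left(1 - \frac{\gamma^4}{\sigma^4}\right)^{-(p-1)/2},
\end{equation*}
where the second equality follows by conditioning on $\xb$ (so that $\langle\xb,\xb'\rangle \mid \xb \sim N(0, \gamma^2\|\xb\|^2)$) and then applying the $\chi^2_{p-1}$ moment generating function to $\|\xb\|^2/\gamma^2$. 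Substituting $\gamma^2 = \mu^2/(p-1)$ shows that the right-hand side tends to $1$ precisely when $\mu/\sigma = o(p^{1/4})$.

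The main obstacle is a mild technicality: the Gaussian prior does not literally concentrate on $\Xcal_U(\mu)$, since $\|\xb\|$ has a non-vanishing probability of falling below $\mu$. I would handle this by enlarging $\gamma^2$ by a fixed constant factor and conditioning on $\{\|\xb\| \ge \mu\}$, whose probability approaches $1$ because $\|\xb\|^2/\gamma^2 \sim \chi^2_{p-1}$ concentrates at $p-1$; the $\chi^2$ bound above changes only by an $O(1)$ factor, so the $p^{1/4}$ threshold is preserved. The remaining ingredients---the $\chi^2$ moment generating function, Chebyshev-type tail bounds, and the truncation step---are routine once the chi-squared identity is in hand.
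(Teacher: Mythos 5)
The paper itself offers no proof of this statement: it is explicitly ``recounted'' from Ingster and Suslina \cite{ingster2003nonparametric}, so there is no internal argument to compare against. Your proposal supplies a correct, self-contained proof, and it is in fact the standard route from the cited reference: direct noncentral-$\chi^2$ analysis of the energy statistic for the upper bound, and the Ingster--Suslina second-moment ($\chi^2$-divergence) method with a Gaussian prior $N(0,\gamma^2\Pi)$ on $\one^\perp$ for the lower bound. The key identity $1+\chi^2(\PP_\pi,\PP_0)=\EE\exp(\langle\xb,\xb'\rangle/\sigma^2)=(1-\gamma^4/\sigma^4)^{-(p-1)/2}$ is computed correctly (valid since $\gamma^2=O(\mu^2/p)=o(\sigma^2)$ in the relevant regime), and with $\gamma^2\asymp\mu^2/(p-1)$ it vanishes exactly when $\mu/\sigma=o(p^{1/4})$. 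Your handling of the two standard technicalities is also right: the composite null causes no trouble because $\yb-\bar\yb=\epsilonb-\bar\epsilonb$ under any constant mean (so the test is shift-invariant and type-I control is uniform), and the truncation of the prior to $\{\|\xb\|\ge\mu\}$ after inflating $\gamma^2$ by a constant is the usual fix, costing only an $O(1)$ factor in the exponent and a total-variation term $\pi(\|\xb\|<\mu)\to 0$. One cosmetic slip: under the alternative $\mathrm{Var}_1[\hat s]=2(p-1)+4\lambda$ with $\lambda=\|\xb-\bar\xb\|^2/\sigma^2\ge\mu^2/\sigma^2$, so your bound $\mathrm{Var}_1[\hat s]\le 2(p-1)+4\mu^2/\sigma^2$ has the inequality pointing the wrong way over the class; this is harmless, since Chebyshev gives type-II error at most $\bigl(2(p-1)+4\lambda\bigr)/\bigl(\lambda-c_p\sqrt{p-1}\bigr)^2$, which tends to $0$ uniformly in $\lambda\ge\mu^2/\sigma^2$ once $\mu^2/\sigma^2=\omega(c_p\sqrt{p})$, but the monotonicity-in-$\lambda$ (or stochastic ordering) observation should be stated rather than the reversed variance bound.
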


This result highlights the aforementioned $1/4$th power scaling in critical SNRs.
One would hope that in the graph-structured setting, we can tolerate an SNR smaller than this.
We will see that this is achieved by the GFSS, and by its adaptive version in most cases.
For completeness, we will also look at two other test statistics.
Let the statistics $\max_{i \in [p]} |y_i - \bar y|$ and $\one^\top \yb$ be called the {\em max} statistic and the {\em aggregate} statistic respectively.
Then they have the following required SNR's for the piece-wise constant alternative structure, $H_1^{PC}$.

\begin{theorem}
\label{thm:max_agg}
(a) Consider a sequence of draws from $H_1^{C}: \yb = \xb + \epsilon$ with $\xb \in \Xcal_{PC}(\mu,C)$, for the active cluster $C$.  The critical SNR for $H_0$ versus $H_1^{C}$ of the max statistic is between the following
\[
\frac \mu \sigma = \omega(\sqrt{|C|}), \quad  \frac \mu \sigma = o(\sqrt{|C| \log p})
\]
while the upper bound is an equality ($\asymp$) if $\log |C| = o( \log p )$.\newline
(b) Suppose further that the alternative $H_1^{C}$ has the more specific form:
\[
\xb_C = \sqrt{\frac{p}{|C||\bar C|}} \mu \one_C
\]
The critical SNR for $\xb = \zero$ versus $\xb = \xb_C$ of the aggregate test statistic is $\mu / \sigma \asymp \sqrt{|\bar C|/|C|}$.
\end{theorem}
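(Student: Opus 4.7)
For part (a), I plan to analyze $\max_i|y_i-\bar y|$ via Gaussian extreme-value theory. Under $H_0$ with $\xb=\alpha\one$, the centered vector has covariance $\sigma^2(I-\one\one^\top/p)$, so $\max_i|y_i-\bar y|$ concentrates at $\sigma\sqrt{2\log p}$ with Gumbel fluctuations of scale $\sigma/\sqrt{\log p}$; any threshold yielding vanishing Type~I error must satisfy $\tau_p\ge\sigma\sqrt{2\log p}(1-o(1))$. Under $H_1^C$ with $\xb=\alpha\one+\delta\one_C$ and $|C|\le p/2$ (without loss of generality by the $C\leftrightarrow\bar C$ symmetry), the mean shift at $i\in C$ is $\delta|\bar C|/p\ge|\delta|/2$, and the constraint $\sqrt{|C||\bar C|/p}\,|\delta|\ge\mu$ yields $|\delta|\gtrsim\mu/\sqrt{|C|}$.

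For the sufficient direction ($\mu/\sigma=\omega(\sqrt{|C|\log p})$), even a single coordinate $i\in C$ has $y_i-\bar y\sim N(\delta|\bar C|/p,\sigma^2)$ with mean exceeding $\tau_p$ by $\omega(\sigma)$, so $|y_i-\bar y|>\tau_p$ with probability tending to $1$ and the max test rejects. For the necessary direction ($\mu/\sigma=o(\sqrt{|C|})$, equivalently $|\delta|=o(\sigma)$), I couple $H_0$ and $H_1^C$ with common noise so that $y_i^{H_1}-\bar y^{H_1}=y_i^{H_0}-\bar y^{H_0}+\eta_i$ with $|\eta_i|\le|\delta|=o(\sigma)$; a direct Gaussian tail computation summing $P(|N(\eta_i,\sigma^2)|>\tau_p)$ over $i$ shows the expected count of exceedances under $H_1^C$ shares the same asymptotic limit as under $H_0$, preventing any threshold from simultaneously achieving vanishing Type~I and Type~II error. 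For the tight case $\log|C|=o(\log p)$: the max of $y_i-\bar y$ over $i\in C$ is asymptotically $|\delta||\bar C|/p+\sigma\sqrt{2\log|C|}$, and both terms are $o(\sigma\sqrt{\log p})$ under $\mu/\sigma=o(\sqrt{|C|\log p})$ and $\log|C|=o(\log p)$; meanwhile the max over $\bar C$ concentrates at $\sigma\sqrt{2\log|\bar C|}+o(\sigma)\sim\sigma\sqrt{2\log p}$, matching $H_0$, so detection fails and the gap closes to $\mu/\sigma\asymp\sqrt{|C|\log p}$.

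For part (b), the aggregate statistic is exactly Gaussian under both hypotheses with common variance $p\sigma^2$: mean $0$ under $\xb=\zero$, and $\one^\top\xb_C=\mu\sqrt{p|C|/|\bar C|}$ under $\xb=\xb_C$. Two equal-variance Gaussians are asymptotically distinguishable by the Neyman--Pearson test iff the ratio of mean gap to standard deviation tends to infinity, i.e.\ iff $(\mu/\sigma)\sqrt{|C|/|\bar C|}\to\infty$; conversely, if this ratio tends to zero, the total variation distance vanishes and no test distinguishes. This gives the critical SNR $\mu/\sigma\asymp\sqrt{|\bar C|/|C|}$.

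The main obstacle will be making the extreme-value asymptotics in (a) precise enough for both the necessary direction and the tight case: in particular, controlling how the rank-one $O(1/p)$ covariance perturbation (from $\bar\epsilon$) and the non-uniform mean shifts $\pm\delta\cdot O(1)$ on $\bar C$ affect the leading-order $\sigma\sqrt{2\log p}$ of the max, and quantifying the Gumbel-scale overlap of the two max distributions when $|\delta|=o(\sigma)$. I expect to handle this by a Slepian-type comparison to an independent Gaussian sequence (since $\sigma^2(I-\one\one^\top/p)$ is dominated in the Slepian sense by $\sigma^2 I$) combined with direct Mills-ratio estimates.
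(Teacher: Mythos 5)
Your part (b) is correct and coincides with the paper's own one-line proof: the aggregate statistic is $N(0,p\sigma^2)$ under the null and acquires mean $\sqrt{p|C|/|\bar C|}\,\mu$ under the alternative, so the equal-variance Neyman--Pearson criterion gives $\mu/\sigma\asymp\sqrt{|\bar C|/|C|}$. For part (a) the paper gives no argument at all beyond citing Corollary 3.10 of \cite{ingster2003nonparametric}, so your self-contained extreme-value route is necessarily a different approach. Within it, the sufficiency direction is sound, and the tight case is salvageable: under $\log|C|=o(\log p)$ the location $|\delta|+\sigma\sqrt{2\log|C|}$ of the max over $C$ sits below any threshold with vanishing Type~I error by a gap of order $\sigma\sqrt{\log p}$, which dwarfs the Gumbel scale $\sigma/\sqrt{\log p}$ --- but you should argue it at that scale explicitly, since ``both terms are $o(\sigma\sqrt{\log p})$'' compares at the wrong resolution: two maxima whose locations agree only to $o(\sigma\sqrt{\log p})$ can still be perfectly separable.

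The genuine gap is the necessity direction $\mu/\sigma=o(\sqrt{|C|})$. Your key claim --- that mean shifts $|\eta_i|\le|\delta|=o(\sigma)$ leave the expected exceedance count asymptotically unchanged --- is false at extreme-value thresholds: for $\tau_p\asymp\sigma\sqrt{2\log p}$ the Mills ratio gives
\[
\frac{\bar\Phi\bigl((\tau_p-\eta)/\sigma\bigr)}{\bar\Phi(\tau_p/\sigma)}=e^{(\eta\tau_p/\sigma^2)(1+o(1))},
\]
so the count is sensitive to shifts at scale $\sigma/\sqrt{\log p}$, not $\sigma$. Concretely, take $|C|=p/2$ and $|\delta|=\sigma/\log\log p$, so that $\mu/\sigma\asymp\sqrt{p}/\log\log p=o(\sqrt{|C|})$: the location of the $H_1^{C}$ max exceeds the $H_0$ location by roughly $|\delta|/2\gg\sigma/\sqrt{2\log p}$, and thresholding at $a_p+|\delta|/4$ (with $a_p$ the null centering) sends both error probabilities to zero --- the max test actually succeeds in a regime where your computation (and, read literally for $|C|\asymp p$, the theorem) asserts it fails. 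Your count-matching argument does go through when $\log(p/|C|)\asymp\log p$, e.g.\ $|C|\le p^{1-\epsilon}$, because then the $C$-contribution to the expected count is $(|C|/p)\,e^{o(\sqrt{\log p})}$ times the null count and vanishes at any admissible threshold; that restricted regime is what the cited corollary effectively covers. So you must either restrict the necessity claim accordingly (consistent with the theorem's equality condition $\log|C|=o(\log p)$), or replace count matching by the true detection boundary of the max test, $|\delta|\asymp\sigma\bigl(\sqrt{2\log p}-\sqrt{2\log|C|}\bigr)$ floored at the Gumbel scale $\sigma/\sqrt{\log p}$. The Slepian comparison you propose for the rank-one correlation $-1/p$ is fine but one-directional; at that correlation level the perturbation is harmless anyway, and it is not the binding difficulty.
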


\begin{proof}
(a) follows directly from \cite{ingster2003nonparametric} (Corollary 3.10).
(b) This follows from the fact that under $H_0$, the test statistic is $\Ncal(0,p\sigma^2)$, while under $H_1: \xb = \xb_C$ it has mean $\sqrt{(p |C|)/|\bar C|} \mu$.   
\end{proof}

\begin{remark}
Notice that the critical SNR of the max statistic, Theorem \ref{thm:max_agg} (a), can be significantly worse than the energy and aggregate statistics if $|C| \ge \sqrt p$.
Otherwise the max statistic is superior.
Similarly, (b) provides worse performance than the energy and max statistic if $|C| \le \sqrt p$.
These only hold for the piecewise constant graph structure of $H_1^{PC}$.
We state the results with the dependence on $|C|$, as opposed to their worst case in the class $\Ccal(\rho)$, because one could apply an omnibus test that adapts to whichever test performs better.
\end{remark}


\section{Graph Fourier Scan Statistic}

In order to derive the Graph Fourier Scan Statistic (GFSS), we will consider specifically the piece-wise constant graph structure, $H^{PC}_1$.
Before we arrive at the GFSS, we recall the definition of the Spectral Scan Statistic (SSS).
While the GFSS is shown to be a relaxation of the SSS, we favor the GFSS because it is simple to implement, performs as well as the SSS in practice, and is the basis of the construction of the adaptive GFSS.

\subsection{Derivation of GFSS}

The hypothesis testing problem with signal in $\Xcal_{PC}(\mu,\rho)$ presents
two challenges: (1) the model contains an unbounded nuisance parameter $\alpha
\in \mathbb{R}$ and (2) the alternative hypothesis is comprised of a finite
disjoint union of composite hypotheses indexed by $\mathcal{C}$. These features
set our problem apart from existing work of structured normal means problems
(see, e.g. \cite{castro:05, arias2008searching,
arias2011detection,addario2010combinatorial}), which does not consider nuisance
parameters and relies on a simplified framework consisting of a simple null
hypothesis and a composite hypothesis consisting of disjoint unions of simple
alternatives. 


To derive the GFSS we will first consider the simpler problem of testing the null hypothesis that $\xb = \bar \xb$,
i.e. that the signal is constant, versus the alternative composite hypothesis that
\[
    \xb = \alpha \one + \delta \one_C : \alpha, \delta \in
    \RR, \delta \neq 0, 
\]
for one given non-empty $C \subset V$. A standard approach to solve this
testing problem is to compute the likelihood ratio (LR) statistic
\begin{equation}
\label{eq:LR}
2 \log \Lambda_C(\yb) = \frac{1}{\sigma^2} \frac{p}{|C| |\bar{C}|} \Big( \sum_{v \in C} \tilde{y}_v \Big)^2,
\end{equation}
where $\tilde \yb = \yb - \bar \yb = (\tilde y_v, v \in V)$, 
and to reject the null hypothesis for large values of $ \Lambda_C(\yb)$
(the exact threshold for rejection will depend on the choice of the test significance level). Equation \eqref{eq:LR} was first obtained in
\cite{sharpnack2012changepoint}. In Appendix B, we provide an alternative derivation 
that shows rigorously how we can eliminate the interference caused by the
nuisance parameter by considering test procedures that are independent of
$\alpha$ (or equivalently $\bar \xb$). The formal justification for this choice
is based on the theory of optimal invariant hypothesis testing (see, e.g.,
\cite{lehmann2005testing}) and of uniformly best constant power tests (see
\cite{fouladirad2008optimal,fouladirad2005optimal,fillatre2012,scharf94,baygun.hero:95,wald:43}). 


%

When testing against the more complex composite alternative $\xb \in \{
    \Xcal_{PC}(\mu,C), C \in \mathcal{C}(\rho)\}$, for a given $\rho>0$, 
 it is customary to consider instead the generalized likelihood ratio (GLR) statistic, which in our case reduces to
\[
\hat g =  \max_{C \in \mathcal{C}(\rho)} 2 \sigma^2 \log \Lambda_C(\yb).
\]
With simple algebraic manipulations of the LR statistic \eqref{eq:LR}, we find
that the GLR statistic has a very convenient form which is tied to the spectral
properties of the graph $G$ via its Laplacian. We state it as a result and omit
the simple proof.
\begin{lemma}
\label{lem:GLRTform}
  Let $\Kb = \Ib - \frac{1}{p} \one \one^\top$ and set $\tilde{\yb} = \Kb \yb$. Then
\begin{equation}
\label{eqn:GLRT}
\hat g = \max_{\xb \in \{0, 1\}^p} \frac{\xb^\top \tilde{\yb}\tilde{\yb}^\top \xb}{\xb^\top \Kb \xb} \textrm{ s.t. } \frac{ \xb^\top \Delta \xb}{\xb^\top \Kb \xb} \le \rho,
\end{equation}
where $\Delta$ is the combinatorial Laplacian of the graph $G$.
\end{lemma}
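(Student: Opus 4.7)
The plan is to verify the identity by passing from the cluster index $C\in\mathcal{C}(\rho)$ to the indicator vector $\xb=\one_C\in\{0,1\}^p$ and showing that the objective and constraint in \eqref{eqn:GLRT} reduce exactly to the LR statistic \eqref{eq:LR} and the cut sparsity condition defining $\Ccal(\rho)$. Since the set $\{0,1\}^p$ is in bijection with the subsets of $V$ via $\xb\leftrightarrow C=\{v:x_v=1\}$, this will establish the equality after handling the two trivial indicators separately.

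First I would compute the three quadratic forms associated to $\xb=\one_C$. Using idempotency of $\Kb$,
\[
\xb^\top \Kb \xb \;=\; \|\Kb \one_C\|^2 \;=\; \Big\|\one_C - \tfrac{|C|}{p}\one\Big\|^2 \;=\; \frac{|C|\,|\bar C|}{p},
\]
and
\[
\xb^\top \tilde \yb \;=\; \one_C^\top \Kb \yb \;=\; \sum_{v\in C}(y_v - \bar y) \;=\; \sum_{v\in C}\tilde y_v,
\]
so that $\xb^\top\tilde\yb\tilde\yb^\top\xb=(\sum_{v\in C}\tilde y_v)^2$. For the constraint I would invoke the standard Laplacian identity $\one_C^\top\Delta\one_C=\Wb(\partial C)$ (which follows from the fact that the quadratic form $\xb^\top\Delta\xb=\sum_{(i,j)\in E}W_{i,j}(x_i-x_j)^2$ only picks up contributions from edges crossing $C$).

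Combining these, the objective in \eqref{eqn:GLRT} equals
\[
\frac{\xb^\top\tilde\yb\tilde\yb^\top\xb}{\xb^\top\Kb\xb} \;=\; \frac{p}{|C|\,|\bar C|}\Big(\sum_{v\in C}\tilde y_v\Big)^2 \;=\; 2\sigma^2\log\Lambda_C(\yb),
\]
by \eqref{eq:LR}, and the constraint becomes
\[
\frac{\xb^\top\Delta\xb}{\xb^\top\Kb\xb} \;=\; \frac{p\,\Wb(\partial C)}{|C|\,|\bar C|} \;\le\; \rho,
\]
which by definition \eqref{eqn:Cclass} is exactly the membership condition $C\in\Ccal(\rho)$. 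Maximizing over $\xb\in\{0,1\}^p$ subject to the constraint thus matches maximizing $2\sigma^2\log\Lambda_C(\yb)$ over $C\in\Ccal(\rho)$.

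The only mild subtlety is the degenerate pair $\xb=\zero$ and $\xb=\one$, which correspond to $C\in\{\emptyset,V\}$: here $\xb^\top\Kb\xb=0$, so neither the numerator nor the ratio in \eqref{eqn:GLRT} is well-defined and these points are implicitly excluded from the feasible region (matching the restriction $C\neq\emptyset$ and the fact that $C=V$ is never in $\Ccal(\rho)$ for a nonempty signal class). With this convention, the bijection $C\leftrightarrow \one_C$ between non-trivial subsets and admissible $\xb$'s yields the claimed equality. No genuine obstacle is expected; the bookkeeping of the trivial indicators is the only item requiring care.
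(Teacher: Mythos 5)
Your proof is correct and is essentially the argument the paper has in mind: the paper explicitly omits this ``simple proof,'' but the key identities you use ($\one_C^\top \Kb \one_C = |C||\bar C|/p$ and $\one_C^\top \Kb \yb = \sum_{v \in C} \tilde y_v$) are exactly the computations carried out in the paper's Appendix B, combined with the standard Laplacian cut identity $\one_C^\top \Delta \one_C = \Wb(\partial C)$. Your handling of the degenerate indicators $\xb \in \{\zero, \one\}$ is the right bookkeeping and matches the restriction $C \notin \{\emptyset, V\}$ in the paper's setup.
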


An interesting feature of the GLR statistic is that the program \eqref{eqn:GLRT}
is directly related to the renowned sparsest cut problem in combinatorial
optimization. See Section 2 of \cite{sharpnack2012changepoint} for details.
In order to obtain a tractable relaxation of the GLR statistic
\eqref{eqn:GLRT}, \cite{sharpnack2012changepoint} introduced the Spectral Scan
Statistic (SSS), defined as
\[
\hat{s} = \sup_{\xb \in \RR^p} (\xb^\top \tilde\yb)^2  \textrm{ s.t. }\xb^\top \Delta \xb \le \rho, \| \xb \| \le 1, \xb^\top \one = 0.
\]
Indeed, \cite{sharpnack2012changepoint} proved that the SSS is an upper bound to
the GLRT statistic:
\begin{proposition}
\label{prop:SSS_def}
The GLR statistic is bounded by the SSS: $\hat g \le \hat s$, almost everywhere.
\end{proposition}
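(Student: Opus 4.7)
The plan is to demonstrate $\hat g \le \hat s$ by exhibiting, for every binary vector $\xb \in \{0,1\}^p$ that is feasible for the GLR program, a continuous vector $\zb \in \mathbb{R}^p$ that is feasible for the SSS program and achieves the same objective value. The natural candidate is the normalized centered indicator $\zb := \Kb \xb / \| \Kb \xb \|$.

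First I would record the three linear-algebraic facts that drive everything: (i) $\Kb = \Ib - \frac{1}{p} \one \one^\top$ is the orthogonal projector onto $\one^\perp$, so $\Kb^2 = \Kb$ and $\Kb \one = \zero$; (ii) since $\Delta \one = \zero$ (all rows of $\Delta$ sum to zero), we have $\Delta \Kb = \Delta = \Kb \Delta$, and therefore $\Kb \Delta \Kb = \Delta$; (iii) $\tilde\yb = \Kb \yb = \Kb \Kb \yb$. These identities will let me rewrite quadratic forms in the binary variable as quadratic forms in $\zb$ with matching numerators and denominators.

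Next I would take any $\xb \in \{0,1\}^p$ feasible for $\hat g$ (so $\xb$ is the indicator $\one_C$ of some $C \in \Ccal(\rho)$, in particular $C \notin \{\emptyset, V\}$ so that $\xb^\top \Kb \xb = |C||\bar C|/p > 0$ and $\zb$ is well-defined). I would verify the three SSS constraints in turn: $\|\zb\| = 1$ by construction; $\zb^\top \one = \|\Kb \xb\|^{-1} \xb^\top \Kb \one = 0$ using $\Kb \one = \zero$; and
\[
\zb^\top \Delta \zb = \frac{\xb^\top \Kb \Delta \Kb \xb}{\xb^\top \Kb \xb} = \frac{\xb^\top \Delta \xb}{\xb^\top \Kb \xb} \le \rho,
\]
where the middle equality uses $\Kb \Delta \Kb = \Delta$ and the final inequality is precisely the GLR feasibility constraint on $\xb$.

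Finally I would compare objective values. Using $\Kb \yb = \tilde\yb$ and $\Kb^2 = \Kb$,
\[
(\zb^\top \tilde\yb)^2 = \frac{(\xb^\top \Kb \tilde\yb)^2}{\xb^\top \Kb \xb} = \frac{(\xb^\top \tilde\yb)^2}{\xb^\top \Kb \xb} = \frac{\xb^\top \tilde\yb \tilde\yb^\top \xb}{\xb^\top \Kb \xb}.
\]
Thus every feasible point of the GLR program produces a feasible point of the SSS program with equal objective, so taking the maximum over $\xb \in \{0,1\}^p$ on the left and the supremum over $\xb \in \mathbb{R}^p$ on the right yields $\hat g \le \hat s$. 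There is no real obstacle here; the only mild subtlety is the degenerate case $\Kb \xb = \zero$, which is excluded because elements of $\Ccal(\rho)$ are proper nonempty subsets of $V$, and this is also where the qualifier ``almost everywhere'' in $\yb$ handles any measure-zero event where the GLR maximizer might otherwise fail to be unique or interior.
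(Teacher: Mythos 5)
Your proof is correct and takes essentially the same route as the paper: the paper's argument (sketched in the appendix, which concludes ``the result now follows by considering all the indicator functions corresponding to the sets in $\mathcal{C}$,'' with details in the cited prior work) likewise maps each centered, normalized indicator $\Kb \one_C / \| \Kb \one_C \|$ into the SSS feasible set using $\Kb \Delta \Kb = \Delta$ and $\Kb \one = \zero$, matching objective values termwise. One minor remark: your construction actually yields $\hat g \le \hat s$ for \emph{every} $\yb$, so the ``almost everywhere'' qualifier is simply conservative and has nothing to do with uniqueness of the GLR maximizer.
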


Notice that because the domain $\Xcal = \{ \xb \in \RR^n : \xb^\top \Delta \xb \le \rho, \| \xb \| \le 1, \xb^\top \one = 0 \}$ is symmetric around the origin, this is precisely the square of the solution to
\begin{equation}
\label{eqn:sGP}
\sqrt{\hat{s}} = \sup_{\xb \in \RR^n} \xb^\top \yb  \textrm{ s.t. }\xb^\top \Delta \xb \le \rho, \| \xb \| \le 1, \xb^\top \one = 0,
\end{equation}
where we have used the fact that $\xb^\top \tilde\yb = ( (\Ib - \frac{1}{n}\one \one^\top) \xb )^\top \yb = \xb^\top \yb$ because $\xb^\top \one = 0$ within $\Xcal$. 

\begin{proposition}
\label{prop:GFSS_def}
Recall the definition of the GFSS, $\hat t$ in \eqref{eq:GFSS}.
The SSS as a function of $\rho$ can be bounded above and below in the following:
\[
\hat t + \sum_{i = 2}^p \min \left\{1 , \frac{\rho}{\lambda_i} \right\} \le \hat s \le 2 \left(\hat t + \sum_{i = 2}^p \min \left\{1 , \frac{\rho}{\lambda_i} \right\}\right).
\]
\end{proposition}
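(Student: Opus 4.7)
The plan is to diagonalize the optimization problem for $\hat s$ in the Laplacian eigenbasis, sandwich the feasible set between two ellipsoids, and recognize the bounding ellipsoidal optima as the constant plus $\hat t$.

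First, I would use the constraint $\xb^\top \one = 0$ and the fact that $\ub_1 = p^{-1/2}\one$ to expand $\xb = \sum_{i=2}^p \alpha_i \ub_i$. Writing $z_i := \ub_i^\top \yb$, the objective becomes $\xb^\top \tilde\yb = \xb^\top \yb = \sum_{i=2}^p \alpha_i z_i$ (since $\xb$ is orthogonal to $\bar\yb$), the smoothness constraint becomes $\sum_{i=2}^p \lambda_i \alpha_i^2 \le \rho$, and the norm constraint becomes $\sum_{i=2}^p \alpha_i^2 \le 1$. So
\[
\hat s = \sup\Bigl\{\Bigl(\sum_{i=2}^p \alpha_i z_i\Bigr)^2 : \sum_{i=2}^p \alpha_i^2 \le 1,\ \sum_{i=2}^p (\lambda_i/\rho)\alpha_i^2 \le 1\Bigr\}.
\]

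Second, I would set $c_i := \max\{1,\lambda_i/\rho\}$ and note the elementary sandwich of feasible sets. For the lower containment, $c_i \ge 1$ and $c_i \ge \lambda_i/\rho$, so $\sum c_i \alpha_i^2 \le 1$ implies both original constraints; for the upper containment, $c_i \le 1 + \lambda_i/\rho$, so both original constraints together give $\sum c_i \alpha_i^2 \le 2$. Hence
\[
\Bigl\{\sum c_i \alpha_i^2 \le 1\Bigr\} \subseteq \text{(feasible set)} \subseteq \Bigl\{\sum c_i \alpha_i^2 \le 2\Bigr\}.
\]

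Third, I would maximize the linear functional over each bounding ellipsoid by Cauchy--Schwarz: for $K\in\{1,2\}$,
\[
\sup_{\sum c_i\alpha_i^2 \le K}\Bigl(\sum \alpha_i z_i\Bigr)^2 \;=\; K\sum_{i=2}^p \frac{z_i^2}{c_i} \;=\; K\sum_{i=2}^p \min\Bigl\{1,\frac{\rho}{\lambda_i}\Bigr\}z_i^2,
\]
since $1/c_i = \min\{1,\rho/\lambda_i\}$. This sandwiches $\hat s$ between $\sum_{i=2}^p \min\{1,\rho/\lambda_i\}z_i^2$ and twice that quantity.

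Finally, I would substitute the identity $\hat t = \sum_{i=2}^p \min\{1,\rho/\lambda_i\}(z_i^2-1)$, which rearranges to $\sum_{i=2}^p \min\{1,\rho/\lambda_i\}z_i^2 = \hat t + \sum_{i=2}^p \min\{1,\rho/\lambda_i\}$, yielding the claimed inequality. The only nontrivial step is the sandwich in the second paragraph, but it reduces to the elementary observation that $\max(a,b) \le a+b \le 2\max(a,b)$ for nonnegative $a,b$, so no serious obstacle arises.
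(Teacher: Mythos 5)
Your proof is correct and follows essentially the same route as the paper's: diagonalize in the Laplacian eigenbasis, sandwich the feasible set between the two ellipsoids defined by the weights $\max\{1,\lambda_i/\rho\}$ with radii $1$ and $\sqrt{2}$, and evaluate the linear maximum over each ellipsoid. The only cosmetic difference is that you compute that ellipsoidal supremum directly via Cauchy--Schwarz, whereas the paper does the same computation through Lagrangian strong duality.
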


The proof is provided in Appendix A.
The implication of Propositions \ref{prop:SSS_def} and \ref{prop:GFSS_def} is that the GFSS, $\hat t$, is a relaxation of the GLRT, $\hat g$.
It is not clear, even if it is possible to obtain a poly-time algorithm that can distinguish $H_0$ from $H_1^{PC}$ over {\em any} graph under the critical SNR regime.
The GFSS is a computationally tractable alternative to the GLRT, and as we will see, it is often a vast improvement over the naive test statistics.
Let us consider the GFSS and show what it does in our torus example.

\begin{example}{(GFSS for the Torus)}
It has been shown that the Laplacian eigenvalues of the torus graph are $2 (2 - \cos (2 \pi i_1/\ell) - \cos (2 \pi i_2 / \ell))$ for all $i_1,i_2 \in [\ell]$  (see \cite{sharpnack2010identifying} for a derivation).
The eigenvectors correspond to that of the discrete Fourier transform.
So the GFSS for the Torus graph corresponds to the energy of linear shrinkage in the frequency domain.
\end{example}


\subsection{Theoretical Analysis of GFSS}

A thorough theoretical analysis of the GFSS has several uses.
In Corollary \ref{cor:crit_SNR}, we characterize the critical signal-to-noise ratio, enabling us to determine the strength of the GFSS as a detector on theoretical grounds.
Theorem \ref{thm:main} will be used to form an adaptive version of the GFSS, which will in turn alleviate the need for specifying $\rho$.

The following main result bounds the test statistic under $H_0$ and under the piece-wise constant ($H_1^{PC}$) and the general graph structured ($H_1^S$).
It is based on the concentration of weighted sums of independent $\chi^2$ random variables found in \cite{laurent2000adaptive}.

\begin{theorem}
\label{thm:main}
Under the null hypothesis $H_0$, with probability at least $1 - \alpha$ where $\alpha \in (0,1)$,
\begin{equation}
\label{eq:null_control}
\hat t \le 2 \left( \sqrt{\sum_{i=2}^p \min \big\{1 , \frac{\rho^2}{\lambda_i^2} \big\} \log (1 / \alpha) } + \log (1 / \alpha) \right).
\end{equation}
Under the alternative hypotheses, $H^{PC}_1, H^S_1$, with probability at least $1 - \gamma$ where $\gamma \in (0,1)$,
\begin{equation}
\label{eq:alt_control}
\hat t \ge \frac{\mu^2}{2 \sigma^2} - \frac {2\mu} \sigma \sqrt{\log \frac 2 \gamma} - 2 \sqrt{\sum_{i=2}^p \min \left\{1 , \frac{\rho^2}{\lambda_i^2} \right\} \log \frac 2\gamma},
\end{equation}
for $\mu/\sigma$ large enough.
\end{theorem}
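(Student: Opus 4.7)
The plan is to work in the Laplacian eigenbasis. Set $w_i := \min\{1, \rho/\lambda_i\}$, $a_i := \ub_i^\top \xb$, and $Z_i := \ub_i^\top \epsilonb$. Since $\{\ub_i\}$ is orthonormal, the $\{Z_i\}_{i \ge 2}$ are iid $N(0,\sigma^2)$; for brevity I normalise $\sigma = 1$ (the general statement follows by rescaling $\yb/\sigma$, which replaces $a_i$ and $\mu$ by $a_i/\sigma$ and $\mu/\sigma$). Then
\[
\hat t = \sum_{i=2}^p w_i\bigl[(a_i + Z_i)^2 - 1\bigr].
\]
Under $H_0$ we have $a_i = 0$ for $i \ge 2$ because $\ub_i \perp \one$, so $\hat t$ reduces to the centred weighted chi-squared $\sum_{i \ge 2} w_i (Z_i^2 - 1)$. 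Since $\|w\|_\infty \le 1$ and $\|w\|_2^2 = \sum_{i \ge 2} \min\{1,\rho^2/\lambda_i^2\}$, the one-sided Laurent--Massart inequality (Lemma 1 of \cite{laurent2000adaptive}) with deviation parameter $\log(1/\alpha)$ yields \eqref{eq:null_control} directly.

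For the alternative, split
\[
\hat t = S + 2\sum_{i \ge 2} w_i a_i Z_i + \sum_{i \ge 2} w_i (Z_i^2 - 1), \qquad S := \sum_{i \ge 2} w_i a_i^2,
\]
and handle the three terms separately. The heart of the argument is the signal bound $S \ge \mu^2/2$. Given $\xb \in \Xcal_S(\mu,\rho)$, let $C \in \Ccal(\rho)$ be the witnessing cluster and form the unit vector
\[
\ab := \sqrt{\tfrac{|C||\bar C|}{p}}\,\Big(\tfrac{\one_C}{|C|} - \tfrac{\one_{\bar C}}{|\bar C|}\Big).
\]
Using $\one_C^\top \Delta \one_C = \one_{\bar C}^\top \Delta \one_{\bar C} = -\one_C^\top \Delta \one_{\bar C} = \Wb(\partial C)$, a direct calculation gives $\|\ab\| = 1$, $\ab \perp \one$, $\ab^\top \Delta \ab = p\Wb(\partial C)/(|C||\bar C|) \le \rho$, and $|\ab^\top \xb| \ge \mu$ by the definition of $\Xcal_S$. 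Expanding $\ab = \sum_{i \ge 2} \alpha_i \ub_i$ and applying Cauchy--Schwarz with weights $\sqrt{w_i}$,
\[
\mu^2 \le (\ab^\top \xb)^2 = \Big(\sum_{i \ge 2} \tfrac{\alpha_i}{\sqrt{w_i}}\sqrt{w_i}\, a_i\Big)^2 \le \Big(\sum_{i \ge 2} \tfrac{\alpha_i^2}{w_i}\Big) S \le 2 S,
\]
where the final step uses $1/w_i = \max\{1, \lambda_i/\rho\} \le 1 + \lambda_i/\rho$ together with $\sum \alpha_i^2 = 1$ and $\sum \lambda_i \alpha_i^2 = \ab^\top \Delta \ab \le \rho$.

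The remaining two terms are standard concentration. The cross term is a centred Gaussian with variance $4 \sum w_i^2 a_i^2 \le 4 S$ (using $w_i \le 1$), so the Gaussian tail bound gives $2\sum w_i a_i Z_i \ge -2\sqrt{2 S L}$ with probability at least $1 - \gamma/2$, where $L := \log(2/\gamma)$. The lower-tail Laurent--Massart bound gives $\sum w_i (Z_i^2 - 1) \ge -2\sqrt{\|w\|_2^2\, L}$ with the same probability. A union bound yields $\hat t \ge S - 2\sqrt{2 S L} - 2\sqrt{\|w\|_2^2\, L}$ with probability $\ge 1 - \gamma$. The map $S \mapsto S - 2\sqrt{2 S L}$ is increasing on $S > 2L$, so under the hypothesis $\mu^2/2 > 2L$---the precise quantitative form of ``$\mu/\sigma$ large enough''---we substitute $S \ge \mu^2/2$ to obtain $S - 2\sqrt{2 S L} \ge \mu^2/2 - 2\mu\sqrt{L}$, recovering \eqref{eq:alt_control}.

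The genuinely substantive step is the signal bound $S \ge \mu^2/2$: the filter profile $w_i = \min\{1, \rho/\lambda_i\}$ is tuned precisely so that $1/w_i \le 1 + \lambda_i/\rho$ is absorbed by the cut-sparsity constraint $\ab^\top \Delta \ab \le \rho$ into a constant (spectrum-independent) factor of $2$. Any other shrinkage profile would either inflate this loss or enlarge the noise-scale term $\|w\|_2^2$ in the concentration step; everything else is routine.
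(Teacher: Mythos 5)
Your proof is correct and follows essentially the same route as the paper's: the same eigenbasis decomposition into signal, cross, and noise terms, the same Laurent--Massart concentration lemma for the quadratic terms, the same Gaussian tail bound for the cross term (with the identical variance bound via $w_i \le 1$), and the same key signal bound $S \ge \mu^2/2$. The only differences are presentational: you prove the signal bound by an explicit witness vector plus weighted Cauchy--Schwarz, which is just the unwrapped form of the dual-ellipse inequality the paper invokes from the proof of Proposition \ref{prop:GFSS_def}, and you handle $H_1^{PC}$ and $H_1^S$ in a single stroke via the inclusion $\Xcal_{PC}(\mu,\rho) \subset \Xcal_S(\mu,\rho)$ where the paper treats the two cases separately; your version also makes the condition ``$\mu/\sigma$ large enough'' quantitatively explicit ($\mu^2/2 \ge 2\log(2/\gamma)$), which the paper leaves implicit.
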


The proof is provided in Appendix A.
Theorem \ref{thm:main} shows that by setting a threshold to be the right hand side of \eqref{eq:null_control}, we have a level $\alpha$ test.
If we then set the right hand side of \eqref{eq:alt_control} to be this threshold, and solve for $\mu/\sigma$, then we get the lowest SNR such that the test has power $1 - \gamma$ under the alternative.
The result below allows us to compare the GFSS to other tests on asymptotic theoretical grounds.

\begin{corollary}
\label{cor:crit_SNR}
The GFSS $\hat t$ can asymptotically distinguish $H_0$ from $H_1^{PC}, H_1^S$ if the SNR is stronger than
\[
\frac{\mu}{\sigma} = \omega \left( \sum_{i = 2}^p \min \left\{1 , \frac{\rho^2}{\lambda_i^2} \right\} \right)^{1/4}.
\]
\end{corollary}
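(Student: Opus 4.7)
The plan is to deduce the corollary directly from Theorem \ref{thm:main} by choosing shrinking error levels and exhibiting a single threshold that separates the two concentration inequalities. Write $S_p := \sum_{i=2}^p \min\{1, \rho^2/\lambda_i^2\}$, so the SNR hypothesis reads $\mu/\sigma = \omega(S_p^{1/4})$, which in particular gives $\mu^4/(\sigma^4 S_p) \to \infty$. Since by Theorem \ref{thm:lower_bd}(a) we may restrict attention to the regime $\mu/\sigma \to \infty$ (otherwise no test succeeds), $\mu^2/\sigma^2 \to \infty$ as well.

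Next I would pick sequences $\alpha_p, \gamma_p \searrow 0$ slowly enough that $\log(1/\alpha_p), \log(1/\gamma_p) \to \infty$, subject to
\[
\log(1/\alpha_p) = o\!\left(\min\!\left\{\tfrac{\mu^2}{\sigma^2},\ \tfrac{\mu^4}{\sigma^4 S_p}\right\}\right), \qquad \log(1/\gamma_p) = o\!\left(\tfrac{\mu^2}{\sigma^2}\right).
\]
Such sequences exist by the two limits noted above. Set the threshold $\tau_p$ equal to the right-hand side of \eqref{eq:null_control} at confidence $\alpha_p$, and define the test $T_p = \mathbf{1}\{\hat t > \tau_p\}$. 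By the null bound in Theorem \ref{thm:main}, $\sup_{\PP_0 \in H_0}\PP_0(T_p=1) \le \alpha_p \to 0$, so Type I error vanishes uniformly over $H_0$.

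For Type II error, the alternative bound \eqref{eq:alt_control} at confidence $\gamma_p$ gives, with probability at least $1-\gamma_p$ under any distribution from $H_1^{PC}$ or $H_1^S$,
\[
\hat t \ge \frac{\mu^2}{2\sigma^2} - \frac{2\mu}{\sigma}\sqrt{\log(2/\gamma_p)} - 2\sqrt{S_p\,\log(2/\gamma_p)}.
\]
The asymptotic constraints on $\alpha_p, \gamma_p$ force each of the two subtracted terms above to be $o(\mu^2/\sigma^2)$, and similarly $\tau_p = 2(\sqrt{S_p \log(1/\alpha_p)} + \log(1/\alpha_p)) = o(\mu^2/\sigma^2)$. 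Hence for $p$ large the displayed lower bound on $\hat t$ strictly exceeds $\tau_p$, so $\sup_{\PP_1 \in H_1}\PP_1(T_p=0) \le \gamma_p \to 0$, and $T_p$ asymptotically distinguishes the hypotheses.

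The only mildly delicate step is the joint choice of $\alpha_p$ and $\gamma_p$, i.e., checking that the asymptotic constraints displayed above are simultaneously satisfiable. This reduces to the observation that $\min\{\mu^2/\sigma^2,\ \mu^4/(\sigma^4 S_p)\} \to \infty$ under the corollary's SNR hypothesis (together with $\mu/\sigma \to \infty$), after which everything else is bookkeeping against the two bounds furnished by Theorem \ref{thm:main}.
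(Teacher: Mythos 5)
Your route is exactly the paper's: the paper proves this corollary implicitly, by thresholding at the right-hand side of \eqref{eq:null_control} and comparing against \eqref{eq:alt_control} with slowly vanishing confidence levels, which is precisely your construction. However, one of your displayed constraints is too weak, and the step it is meant to support fails as written. Writing $S_p = \sum_{i=2}^p \min\{1,\rho^2/\lambda_i^2\}$, you only impose $\log(1/\gamma_p) = o(\mu^2/\sigma^2)$, but the second subtracted term in \eqref{eq:alt_control} is $2\sqrt{S_p \log(2/\gamma_p)}$, and making it $o(\mu^2/\sigma^2)$ requires $\log(1/\gamma_p) = o\bigl(\mu^4/(\sigma^4 S_p)\bigr)$. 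This does not follow from your constraint in the regime $\mu^2/\sigma^2 \ll S_p \ll \mu^4/\sigma^4$, which the corollary's hypothesis permits: take $S_p \asymp p$ and $\mu^2/\sigma^2 = p^{3/4}$, so that $\mu/\sigma = p^{3/8} = \omega(S_p^{1/4}) = \omega(p^{1/4})$; then $\log(2/\gamma_p) = p^{3/4}/\log p$ satisfies your constraint, yet $\sqrt{S_p \log(2/\gamma_p)} \asymp p^{7/8}/\sqrt{\log p} \gg p^{3/4} = \mu^2/\sigma^2$, so the claim that your constraints ``force each of the two subtracted terms to be $o(\mu^2/\sigma^2)$'' is false for such a choice of $\gamma_p$, and the lower bound on $\hat t$ need not exceed $\tau_p$.

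The repair is immediate and is in fact latent in your closing remark: impose on $\gamma_p$ the same constraint you impose on $\alpha_p$, namely $\log(1/\gamma_p) = o\bigl(\min\{\mu^2/\sigma^2,\ \mu^4/(\sigma^4 S_p)\}\bigr)$, which is satisfiable precisely because that minimum tends to infinity under the SNR hypothesis together with $\mu/\sigma \to \infty$ (the latter you correctly flag as implicitly required, consistent with the ``for $\mu/\sigma$ large enough'' proviso in Theorem \ref{thm:main}). With that single modification, all four terms --- the two in $\tau_p$ and the two subtracted in \eqref{eq:alt_control} --- are $o(\mu^2/\sigma^2)$, the lower bound $\mu^2/(2\sigma^2) - o(\mu^2/\sigma^2)$ eventually exceeds $\tau_p = o(\mu^2/\sigma^2)$, and your argument goes through, coinciding with the paper's own derivation of the corollary from Theorem \ref{thm:main}.
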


Most notably the critical SNR is lower than $p^{1/4}$ which is the critical SNR enjoyed by the energy test statistic.
Comparing this to the analogous results for the SSS in \cite{sharpnack2012changepoint} (Cor.~8), we see that this is a significant improvement.
The most unreasonable assumption that we have made thus far is that the cut sparsity, $\rho$, is known.
This unreasonable advantage is especially apparent when we compare the GFSS to the max and aggregate statistics that do not require the knowledge of $\rho$.
The following section develops a test that adapts to $\rho$.

Before we delve into that, a remark on the computational complexity of the proposed
test is in order. The worst-case runtime of computing all the eigenvalues of a $p\times p$ 
matrix is cubic in $p$. Thus, we have clearly demonstrated a test that is 
computationally feasible for any graph topology i.e. it runs in polynomial (cubic) time 
in the size of the graph $p$, compared to the GLRT whose computation can be exponential
in $p$ in the worst case. However, cubic computational complexity might be prohibitive 
for very large graphs. 
It turns out that the GFSS can be calculated with just the top $j$ eigenvectors and a Laplacian solver.
Specifically, if $j = \max\{ i : \lambda_i < \rho \}$ and let $\Pb_j$ be the projection onto the span of $\{\ub_i\}_{i=2}^j$ then the GFSS can be written as 
\[
\yb^\top \Pb_j \yb + \rho \yb^\top (\Ib - \Pb_j) \Delta^{\dagger} (\Ib - \Pb_j) \yb - \sum_{i = 2}^p \min\big\{ 1 , \frac{\rho}{\lambda_i} \big\}.
\]
The first term requires the computation of the top $j$ eigenspace, while the second term requires a Laplacian solver.
One can observe that the final term is the expected value of the first two terms applied to a vector drawn from the p-dimensional standard normal, indicating that it can be approximated by Monte carlo sampling.
The computation of the first two terms take time $O(k p^2 + m \textrm{polylog}(m))$ by using the fast Laplacian solvers of \cite{koutis2010approaching}.
Furthermore, when the GFSS will be used on multiple measurement vectors (such as multiple measurements in time), then $\Pb_j$ needs to only be computed once. 



\section{The Adaptive GFSS}

Notice that the SSS and GFSS require that we prespecify the cut sparsity parameter, $\rho$.
While the user may have certain shapes in mind, such as large rectangles in a lattice, it is not reasonable to assume that this can be done for arbitrary graph structure.
In order to adapt to $\rho$ we will consider the test statistic, $\hat t(\rho)$, as a function of $\rho$, as it is allowed to vary.

\begin{definition}
Let $\alpha > 0$ and 
\[
\tau(\rho) = 2 \left( \sqrt{\sum_{i=2}^p \min \big\{1 , \frac{\rho^2}{\lambda_i^2} \big\} \log \left( \frac{p-1}{\alpha} \right) } + \log \left( \frac{p-1}{\alpha} \right) \right).
\]
The adaptive GFSS test is the test that rejects $H_0$ if $\exists \rho > 0$ such that
\begin{equation}
\label{eq:tau_function}
\hat t (\rho) > \tau(\rho).
\end{equation}
\end{definition}
 
As we will now show, in order to compute the entire curve $\hat t(\rho)$, it is sufficient to evaluate $\hat t(\rho)$ only at $p-1$ points.
This is because $\hat t(\rho)$ is piecewise linear with knots at the eigenvalues.
Also, $\tau(\rho)$ is similarly well behaved.
Let $j = \max\{i : \lambda_i \le \rho \}$ then
\[
\hat t(\rho) = \rho \sum_{i = j+1}^p \frac{(\ub_i^\top \yb)^2 - 1}{\lambda_i} + \sum_{i=2}^j ((\ub_i^\top \yb)^2 - 1).
\]
Hence, $\hat t(\rho)$ is piecewise linear with knots at $\{\lambda_i\}_{i = 2}^p$. 
The threshold function can be expressed by
\[
\tau(\rho) = \sqrt{ 4 \left( \rho^2 \sum_{i = j+1}^{p} \lambda_i^{-2} + j\right) \log \frac{p-1}{\alpha} } + 2 \log\left(\frac{p-1}{\alpha}\right).
\]
Define the following quantities,
\[
\begin{aligned}
&A = 4 \log((p-1)/\alpha) \sum_{i = j+1}^{p} \lambda_i^{-2}, \quad B = 4j \log((p-1)/\alpha)  \\
&D = 2 \log((p-1)/\alpha), \quad E = \sum_{i = j+1}^p \frac{(\ub_i^\top \yb)^2 - 1}{\lambda_i}, \\
&F = \sum_{i=2}^j ((\ub_i^\top \yb)^2 - 1).
\end{aligned}
\]
Then we reject iff 
\[
\tau(\rho) = \sqrt{\rho^2 A + B} + D < \rho E + F = \hat t(\rho).
\]
Notice that $A,B,D > 0$, so $\tau(\rho)$ has strictly positive curvature and is convex.
Thus, $\tau(\rho) - \hat t(\rho)$ is convex within $\lambda_j \le \rho \le \lambda_{j+1}$ and has a unique minimum.
We can minimize the unrestricted function,
\[
\rho^* = \arg \min_\rho \sqrt{\rho^2 A + B} + D - \rho E - F
\]
and we find that this is attained at
\[
\rho^* = \left\{
\begin{array}{ll}
0, &E^2 \ge A\\
\sqrt{\frac{E^2 B}{A^2 - E^2 A}}, &\textrm{otherwise}.
\end{array}
 \right.
\]
We know by convexity that if $\rho^* < \lambda_j$ then the constrained maximum is attained at $\lambda_j$, and if $\rho^* > \lambda_{j+1}$ then it is attained at $\lambda_{j+1}$. 
For each $j$, we can construct $A,B,D,E,F$ and define
\[
\rho_j = \left\{
\begin{array}{ll}
\lambda_j, &E^2 \ge A \textrm{ or } \sqrt{\frac{E^2 B}{A^2 - E^2 A}} \le \lambda_j\\
\lambda_{j+1}, &\sqrt{\frac{E^2 B}{A^2 - E^2 A}} \ge \lambda_{j+1}\\
\sqrt{\frac{E^2 B}{A^2 - E^2 A}}, &\textrm{otherwise}.
\end{array}
 \right.
\]
Then the following proposition holds,
\begin{proposition}
\label{prop:discrete}
The adaptive GFSS test rejects $H_0$ if and only if
\[
\exists j \in \{2,\ldots,p\}, \quad \tau(\rho_j) < \hat t(\rho_j).
\]
\end{proposition}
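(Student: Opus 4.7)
The plan is to reduce the continuous rejection rule "$\exists \rho > 0 : \hat t(\rho) > \tau(\rho)$" to a finite check by exploiting convex-concave structure on each subinterval defined by the Laplacian eigenvalues. First I would decompose $(0, \infty)$ into the pieces $[\lambda_j, \lambda_{j+1}]$ on which, as observed in the text preceding the statement, the quantities $A, B, D, E, F$ are constant, so that $\hat t(\rho) = \rho E + F$ is affine in $\rho$ while $\tau(\rho) = \sqrt{\rho^2 A + B} + D$ is strictly convex because its second derivative $AB(\rho^2 A + B)^{-3/2}$ is strictly positive whenever $A, B > 0$. Consequently $\hat t - \tau$ is strictly concave on each subinterval and attains its maximum at a unique point.

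By standard convex analysis, that maximum is either the interior critical point $\rho^\star = \sqrt{E^2 B / (A^2 - E^2 A)}$ obtained from the first-order condition $E = \rho A / \sqrt{\rho^2 A + B}$ (which is real and positive exactly when $E^2 < A$), or, if $\rho^\star$ is undefined or lies outside $[\lambda_j, \lambda_{j+1}]$, the endpoint of the interval selected by the sign of the derivative of $\hat t - \tau$ at the boundary. I would then verify case by case that the three-branch definition of $\rho_j$ dispatches precisely these possibilities: the interior critical point when it lies in the interior, and the nearer endpoint otherwise. Because $\hat t$ and $\tau$ are continuous across the knots $\{\lambda_i\}$, the global supremum of $\hat t - \tau$ over $\rho > 0$ is simply the maximum of the per-interval maxima, and so is strictly positive if and only if $\hat t(\rho_j) > \tau(\rho_j)$ for some $j$.

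The main bookkeeping obstacle, though largely routine, is keeping the case split honest: one must verify, via the monotonicity of $\rho \mapsto \rho A/\sqrt{\rho^2 A + B}$, that the three conditions $E^2 \ge A$, $\rho^\star \le \lambda_j$, and $\rho^\star \ge \lambda_{j+1}$ correctly identify which endpoint (if any) is forced by concavity. A minor additional check is that nothing is lost by restricting to $j \in \{2, \ldots, p\}$: outside the range of knots the functions $\hat t$ and $\tau$ are either trivial or degenerate (below $\lambda_2$ one has $F = 0$, and above $\lambda_p$ both $A = 0$ and $E = 0$ so the two functions are constant), so any maximum realized there is already captured at one of the $\rho_j$.
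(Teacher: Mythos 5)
Your strategy is the same as the paper's: piecewise decomposition at the eigenvalues, affine $\hat t(\rho)=\rho E+F$ against strictly convex $\tau(\rho)=\sqrt{\rho^2A+B}+D$, concave per-interval maximization with clipping to the knots, and a union over $j$. On the generic intervals $[\lambda_j,\lambda_{j+1}]$ your argument is essentially sound, with one caveat worth making explicit: the first-order condition $E=\rho A/\sqrt{\rho^2A+B}$ has a \emph{positive} solution only when $0<E<\sqrt{A}$, not merely when $E^2<A$. For $E\le 0$ the difference $\hat t-\tau$ is decreasing on the whole interval and the constrained maximum sits at the left endpoint even though the closed-form expression for $\rho^\star$ is real and positive; your proviso that endpoints are selected ``by the sign of the derivative at the boundary'' is the correct repair, but it means the case-by-case verification against the three-branch definition of $\rho_j$ must track the sign of $E$ rather than only the comparison of $E^2$ with $A$, since the branch ``$E^2\ge A$'' conflates the two directions of monotonicity ($E\le-\sqrt A$ versus $E\ge\sqrt A$).

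The genuine gap is your dismissal of the region $\rho\in(0,\lambda_2)$. The adaptive test scans over all $\rho>0$, and on this region the functions are not trivial: there $j=1$, so $\hat t(\rho)=\rho E$ with $E=\sum_{i=2}^p((\ub_i^\top\yb)^2-1)/\lambda_i$, and the concave difference $\hat t-\tau$ can perfectly well attain a positive maximum strictly inside $(0,\lambda_2)$, a configuration not a priori captured by any $\rho_j$ with $j\ge 2$. What rescues the restriction to $j\in\{2,\ldots,p\}$ is exactly the fact you cite ($F=0$) combined with a monotonicity observation you did not make: if $\hat t(\rho)>\tau(\rho)$ at some $\rho<\lambda_2$, then $\rho E>\sqrt{\rho^2A+B}+D>\sqrt{\rho^2A+B}$, hence $E>\sqrt{\rho^2A+B}/\rho\ge \rho A/\sqrt{\rho^2A+B}$, so the derivative of $\hat t-\tau$ is strictly positive at every point of violation; any violation therefore propagates rightward to $\lambda_2$, which is covered by the $j=2$ check. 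Without this (or an equivalent) argument the proof of the ``only if'' direction is incomplete; you were right to sense that the excluded range needed attention, but ``trivial or degenerate'' is not an accurate description of $(0,\lambda_2)$. (Your claim about $\rho>\lambda_p$, where $A=E=0$ and both functions are constant so the point $\lambda_p$ suffices, is fine.)
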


This proposition has theoretical implications as well as practical.
It shows us that we only need to provide a theoretical control of $p$ separate GFSS values.
We see that Proposition \ref{prop:discrete} was foreshadowed by the specific form of $\tau(\rho)$ in \eqref{eq:tau_function}.
The clever choice of threshold function $\tau(\rho)$ naturally gives us a control on the false alarm (type 1 error).

\begin{theorem}
\label{thm:adaptive}
The probability of false rejection (type 1 error) is bounded by
\[
\sup_{\PP_0 \in H_0} \PP_0 \{ \exists \rho , \hat t (\rho) > \tau (\rho) \} \le \alpha.
\]
Consider models from the alternative hypotheses, $H_1^{PC}, H_1^{S}$ as functions of $\rho$.
Let $\rho^*$ be the smallest such $\rho^*$ such that $\xb + \epsilonb$ is contained in the alternative hypotheses.
Then the probability of type 2 error is bounded by $\gamma > 0$ if
\[
\begin{aligned}
\tau(\rho^*) < &\frac{\mu^2}{2 \sigma^2} - 2 \frac\mu\sigma \sqrt{2\log(2 / \gamma)}\\
& - 2 \sqrt{\sum_{i=2}^p \min \big\{1 , \frac{{\rho^*}^2}{\lambda_i^2} \big\} \log(2/\gamma)}.
\end{aligned}
\]
\end{theorem}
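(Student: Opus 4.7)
The plan is to reduce both statements to pointwise applications of Theorem \ref{thm:main}, with the continuous search over $\rho>0$ collapsed to $p-1$ evaluations via Proposition \ref{prop:discrete}. The whole design of $\tau(\rho)$ is reverse-engineered from the null bound \eqref{eq:null_control}: at any fixed $\rho$, $\tau(\rho)$ equals exactly the right-hand side of \eqref{eq:null_control} with the single-test level $\alpha$ replaced by $\alpha/(p-1)$. So the Type 1 argument wants to pay a union-bound factor of $p-1$, and the threshold has been inflated to absorb it.

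For Part 1 (Type 1), I would first invoke Proposition \ref{prop:discrete} to write
\[
\PP_0\left(\exists \rho > 0,\ \hat t(\rho) > \tau(\rho)\right) \;=\; \PP_0\left(\exists j \in \{2,\ldots,p\},\ \hat t(\rho_j) > \tau(\rho_j)\right) \;\le\; \sum_{j=2}^p \PP_0\bigl(\hat t(\rho_j) > \tau(\rho_j)\bigr).
\]
I would then show that each summand is at most $\alpha/(p-1)$. Observe that on the piece $\rho\in[\lambda_j,\lambda_{j+1}]$, the event $\hat t(\rho_j)>\tau(\rho_j)$ is the event that the convex function $\tau-\hat t$ dips below zero somewhere in the interval; because $\hat t(\rho)$ is linear in $\rho$ on that piece and $\tau(\rho)$ has the specific form $\sqrt{A\rho^2+B}+D$, this event can be reexpressed as a single inequality for a weighted sum of the $\chi^2_1-1$ variables $(\ub_i^\top\yb)^2-1$ whose weights are bounded by $\min\{1,\rho^2/\lambda_i^2\}$ uniformly in $\rho\in[\lambda_j,\lambda_{j+1}]$. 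The Laurent--Massart tail inequality underlying Theorem \ref{thm:main} then gives the desired $\alpha/(p-1)$ bound, and summing over the $p-1$ pieces yields the Type 1 conclusion.

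For Part 2 (Type 2), the argument is almost immediate. Since $\rho^*$ is the smallest cut-sparsity parameter for which $\xb$ lies in the alternative class, I would apply \eqref{eq:alt_control} of Theorem \ref{thm:main} at $\rho=\rho^*$ and confidence level $\gamma$ to obtain, with probability at least $1-\gamma$,
\[
\hat t(\rho^*) \;\ge\; \frac{\mu^2}{2\sigma^2} - \frac{2\mu}{\sigma}\sqrt{\log(2/\gamma)} - 2\sqrt{\sum_{i=2}^p \min\!\left\{1,\frac{{\rho^*}^2}{\lambda_i^2}\right\}\log(2/\gamma)}.
\]
Under the hypothesis of the theorem, the right-hand side strictly exceeds $\tau(\rho^*)$, so $\hat t(\rho^*)>\tau(\rho^*)$ and the adaptive test rejects at this single value of $\rho$. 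The extra factor $\sqrt{2}$ on the $\mu/\sigma$ term in the theorem hypothesis supplies a small amount of slack that lets us accommodate the slightly sharper constant of Theorem \ref{thm:main} and any lower-order terms arising when translating the "$\mu/\sigma$ large enough" hypothesis there to the concrete threshold here.

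The main obstacle is the uniformity step in Part 1: the knots $\rho_j$ are data-dependent, so one cannot apply Theorem \ref{thm:main} at $\rho_j$ directly as if $\rho_j$ were deterministic. The trick is that we are not truly uniformizing over all $\rho\in[\lambda_j,\lambda_{j+1}]$ — we only need the convex function $\tau-\hat t$ to stay nonnegative — and the specific matched form of $\tau(\rho)$ (square-root-in-$\rho^2$) mirrors the $\sqrt{V(\rho)\log(\cdot)}$ shape of the Laurent--Massart bound, which is precisely what allows the uniform statement on each piece to collapse back to a pointwise chi-squared tail estimate with the same $\alpha/(p-1)$ rate. Getting this reduction clean, rather than paying an additional multiplicative factor from a naive sup-over-interval argument, is the one non-routine element of the proof.
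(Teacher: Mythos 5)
Your proposal matches the paper's own (largely implicit) argument: the paper never gives a formal appendix proof of Theorem \ref{thm:adaptive}, justifying it instead exactly as you do --- Proposition \ref{prop:discrete} reduces the supremum over $\rho$ to $p-1$ per-interval optima, the factor $\log\frac{p-1}{\alpha}$ built into $\tau(\rho)$ absorbs a union bound at per-point level $\alpha/(p-1)$ using the Laurent--Massart tail behind Theorem \ref{thm:main}, and the type 2 bound follows by applying \eqref{eq:alt_control} at the single value $\rho^*$ and noting that rejection at one $\rho$ suffices (with the extra $\sqrt{2}$ in the theorem's statement serving only as slack). The data-dependence of the knots $\rho_j$ that you flag as the one non-routine step is a genuine subtlety, but the paper itself passes over it silently (treating the $p-1$ evaluation points as if fixed), so your sketch is, if anything, more careful than the published treatment.
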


The interpretation is that by providing the thresholding function $\tau(\rho)$ we are in effect thresholding at $p$ distinct points which can be controlled theoretically by union bounding techniques.
The following corollary describes the SNR rates necessary for risk consistency.

\begin{corollary}
The adaptive GFSS asymptotically distinguishes $H_0$ from $H_1^{PC},H_1^S$ if
\[
\frac{\mu}{\sigma} = \omega \left( \sqrt{\sum_{i=2}^p \min \big\{1 , \frac{{\rho^*}^2}{\lambda_i^2} \big\} \log p} + \log p \right)^{1/2}.
\]
\end{corollary}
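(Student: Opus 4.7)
The corollary is a direct asymptotic consequence of Theorem \ref{thm:adaptive}, so the plan is to pick the confidence parameters $\alpha$ and $\gamma$ as slowly decaying sequences in $p$ and then verify that, under the stated SNR scaling, the sufficient condition for vanishing type II error in that theorem is eventually satisfied.

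First I would set $\alpha = \gamma = 1/p$ (any polynomially vanishing sequence would do equally well). With this choice the first conclusion of Theorem \ref{thm:adaptive} gives
\[
\sup_{\PP_0 \in H_0} \PP_0\{\exists \rho,\, \hat t(\rho) > \tau(\rho)\} \le 1/p \;\to\; 0,
\]
which takes care of the type I requirement in the definition of asymptotic distinguishability. Moreover $\log((p-1)/\alpha) \le 2 \log p$ and $\log(2/\gamma) \le 2 \log p$ for all sufficiently large $p$, so, abbreviating $S := \sum_{i=2}^p \min\{1, {\rho^*}^2/\lambda_i^2\}$, the threshold satisfies
\[
\tau(\rho^*) \le 2 \sqrt{2 S \log p} + 4 \log p \;=\; O\!\left(\sqrt{S \log p} + \log p\right).
\]

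Next I would verify that the type II sufficient condition from Theorem \ref{thm:adaptive}, namely
\[
\tau(\rho^*) \;<\; \frac{\mu^2}{2 \sigma^2} - 2 \frac{\mu}{\sigma} \sqrt{2 \log(2/\gamma)} - 2 \sqrt{S \log(2/\gamma)},
\]
is eventually implied by the hypothesis $\mu/\sigma = \omega\!\big((\sqrt{S \log p} + \log p)^{1/2}\big)$. Rearranging, it suffices to show that the three terms $\tau(\rho^*)$, $(\mu/\sigma)\sqrt{\log p}$, and $\sqrt{S \log p}$ are each $o(\mu^2/\sigma^2)$. The first and third are immediate since $\mu^2/\sigma^2 = \omega(\sqrt{S \log p} + \log p)$ by assumption. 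For the cross term I would use the trivial inequality $(\sqrt{S \log p} + \log p)^{1/2} \ge \sqrt{\log p}$, which gives $\mu/\sigma = \omega(\sqrt{\log p})$, so that $(\mu/\sigma)\sqrt{\log p} = o(\mu^2/\sigma^2)$.

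The argument is almost entirely bookkeeping; the only mildly nontrivial point is handling the cross term $(\mu/\sigma)\sqrt{\log p}$, but this is controlled precisely because $\log p$ appears as one of the two summands inside the fourth root in the SNR scaling, which forces $\mu/\sigma$ to grow at least as fast as $\sqrt{\log p}$. Combining the vanishing type I error with the vanishing type II error (bounded by $\gamma = 1/p$ once the sufficient condition holds) gives asymptotic distinguishability in the sense of \eqref{eqn:asymp_dist}, completing the proof.
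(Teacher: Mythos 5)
Your proposal is correct and takes essentially the same route as the paper, which states this corollary as a direct consequence of Theorem \ref{thm:adaptive} (no separate proof is given): one lets $\alpha$ and $\gamma$ vanish polynomially in $p$, bounds $\tau(\rho^*) = O\big(\sqrt{S\log p} + \log p\big)$, and checks that each term in the type II condition is $o(\mu^2/\sigma^2)$. Your treatment of the cross term via $\mu/\sigma = \omega(\sqrt{\log p})$, which follows since $\big(\sqrt{S\log p} + \log p\big)^{1/2} \ge \sqrt{\log p}$, is exactly the one non-trivial step and is handled correctly.
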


So we are able to make all the same theoretical guarantees with the adaptive GFSS as the GFSS with an additional multiplicative term $(\log p)^{1/4}$ and an additive term of $(\log p)^{1/2}$.
We will now show how this theory is applicable by developing corollaries for different specific graph topologies.

\section{Specific Graph Models and Experiments}

In this section, we demonstrate the power and flexibility of Theorem \ref{thm:main} by analyzing in detail the performance of the GFSS over a simulated As detection example and three important graph topologies: balanced binary trees, the torus graph and Kronecker graphs (see \cite{leskovec2007scalable,leskovec2010kronecker}).
The explicit goals of this section are as follows: 
\begin{enumerate}
\item Demonstrate the effectiveness of the GFSS on partially simulated dataset from the Arsenic detection graph.
\item Determine the implications of Theorem \ref{thm:main} in these specific graph examples
for some example signal classes;
\item Demonstrate the competitiveness of the GFSS and the adaptive GFSS against the aggregate and max statistics;
\item Provide an example of the general graph structure;
\end{enumerate}

\subsection{Arsenic Detection Simulation}

In order to compare the GFSS, adaptive GFSS, and the naive estimators, we construct realistic signals over the Arsenic graph (so that we have a ground truth) and generate Gaussian noise over these signals.
This will also provide us with an opportunity to make some practical recommendations on how to use the GFSS.
In order to construct realistic signals, we will use the locations of the principle aquifers in Idaho \cite{USGS2003principal}.
We will associate an As test well with its closest aquifer and select randomly a small number of aquifers that we will consider to be contaminated.
The signal, $x_i$, that we will construct is zero over all of the wells, $i$, not belonging to a contaminated aquifer and elevated over those wells that are.
Specifically, we set the level of elevation in each simulation such that $\| \xb \|^2 = 5$ and we generate additive Gaussian noise with $\sigma = 1$.

In the first experiment (Figure \ref{fig.5} left), we chose $3$ aquifers at random which resulted in $112$ contaminated wells.
By our choice of $\| \xb \|^2 = 5$, the signal size at each contaminated well was $0.47$ which is substantially less than the noise level $\sigma$.
In the second experiment (Figure \ref{fig.5} middle), we chose $1$ of the larger aquifers (with greater than $100$ wells) at random which resulted in $109$ contaminated wells.
In the third experiment (Figure \ref{fig.5} right), we selected $1$ of the somewhat smaller aquifer (with greater than $50$ wells) at random which resulted in $62$ contaminated wells.
We simulate the probability of correct detection (rejecting $H_0$ when the truth is $H_1$) versus the probability of false alarm (falsely rejecting $H_0$) by making $1000$ draws from the noise distribution (with and without the signal $\xb$ for $H_1$ and $H_0$ respectively).

\begin{figure*}[!htbp]
\centering
\mbox{
\subfigure{\includegraphics[width=2.1in]{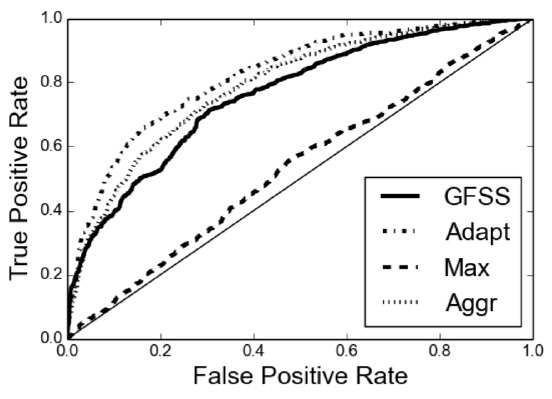}}
\subfigure{\includegraphics[width=2.1in]{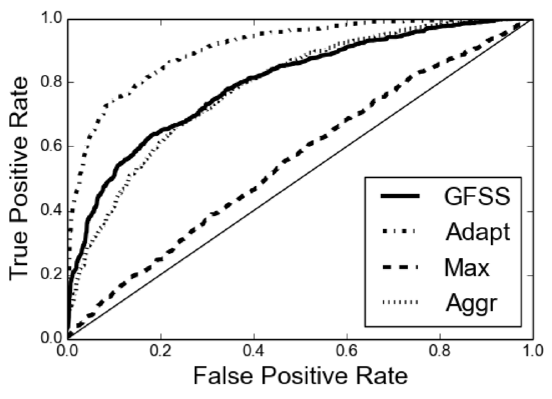}}
\subfigure{\includegraphics[width=2.1in]{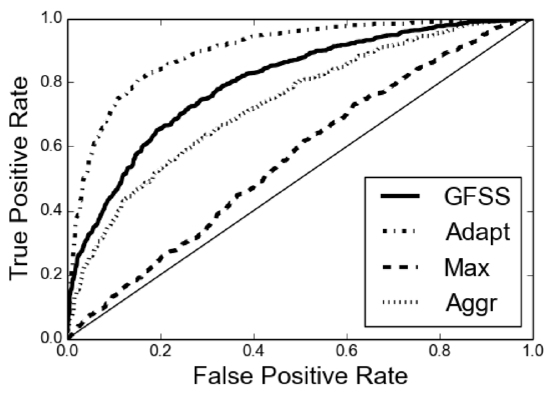}}
}
\caption{\small{\textbf{(Arsenic Contamination Simulations)} Simulations of the size (false positive rate) and the power under $H_1^{PC}$ for the As simulations of the GFSS, adaptive GFSS (Adapt), Max statistic (Max), and Aggregate statistic (Aggr).  The figures are for 3 contaminated aquifers (left), 1 large contaminated aquifer (middle), and 1 smaller contaminated aquifer (right).}}
\label{fig.5}
\end{figure*}

As can be seen the adaptive GFSS test strictly outperforms all of the test statistics, which demonstrates the importance of adapting to the $\rho$ parameter.  
Moreover, the GFSS with the somewhat arbitrary choice of $\rho = \lambda_{109}$ begins to outperform the Aggregate statistic for the smaller contamination as our theory predicts.
The adaptive GFSS is a substantially better alternative to an arbitrary choice of $\rho$ and statistics that do not take the kNN graph structure into account.

While Theorem \ref{thm:main} can be inverted to obtain a P-value that is valid for finite $p$, this may be too conservative for practical purposes.
We recommend one of two approaches: forming a Z-score that is asymptotically normal under $H_0$, and using a permutation test.
Under $H_0$, the GFSS, $\hat t$, has zero mean and because it is the sum of weighted $\chi^2_1$ random variables it has a variance of $2(\sum_{i=2}^p h(\lambda_i)^4)$.
Thus, a Z-score can be calculated by $\hat Z = \hat t / \sqrt{2 \sum_{i=2}^p h(\lambda_i)^4}$,
which can be shown to have an asymptotic standard normal distribution under some regularity conditions.
Thus, we can form an asymptotically valid P-value by applying the standard normal inverse CDF to $\hat Z$.
While this is valid when the noise is Gaussian, in many instances the measurements are not Gaussian and we interpret $H_0$ to mean that $x_i = \EE y_i$ is constant over the graph which is a weaker assumption (recall we had binary observations in section I.A, but we used the GFSS none-the-less).
In this case, we can apply a permutation test, by which we randomly permute the coordinates of $\yb$ and maintain the graph structure.
We interpret the resulting statistic $\hat t$ as a simulation of the GFSS under $H_0$.
Then an estimated P-value would be the fraction of permutations that have a larger $\hat t$ then the actual GFSS.
This was used to construct the reported P-values in Section I.B.

\subsection{Balanced Binary Trees}

Balanced trees are graph structures of particular interest because they provide a simple hierarchical structure.
Furthermore, the behavior of the graph spectra for the balanced binary tree provides a natural multiscale basis \cite{singh2010detecting,sharpnack2010identifying}.
We begin this analysis of the GFSS by applying it to the balanced binary tree (BBT) of depth $\ell$.
We consider the class of signals defined by $\rho = [cp^\alpha(1 - cp^{\alpha-1})]^{-1}$
where $0<c\le 1/2, 0<\alpha\le 1$. 
This class is interesting as it includes, among others, clusters of constant signal which are subtrees of size at least $c p^\alpha$ (subtrees can be 
isolated from a tree by cutting a single edge and hence have cut size $1$). 

\begin{corollary}
\label{cor:BBT}
Let $G$ be a balanced binary tree with $p$ vertices, and let
$\rho = p[cp^\alpha(p - cp^\alpha)]^{-1}$.\newline
(a) The GFSS can asymptotically distinguish $H_0$ from signals 
within $H_1^{PC}, H_1^S$ if the SNR is stronger than
\[
\frac{\mu}{\sigma} = \omega ( p^\frac{1-\alpha}{4} (\log p)^{1/4} ).
\]
(b) The adaptive GFSS distinguishes the hypotheses of (a) if 
\[
\frac{\mu}{\sigma} = \omega ( p^\frac{1-\alpha}{4} (\log p)^{1/2}).
\]
(c) $H_0$ and $H_1^{PC}$ are asymptotically indistinguishable if 
\[
\frac{\mu}{\sigma} = o ( p^\frac{1-\alpha}{4} ).
\]
\end{corollary}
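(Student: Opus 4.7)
The plan is to prove the three parts separately: parts (a) and (b) reduce to the critical-SNR corollaries of Theorem~\ref{thm:main} and Theorem~\ref{thm:adaptive} respectively, and (c) reduces to Theorem~\ref{thm:lower_bd}(b). In all three cases the BBT-specific ingredient is either an estimate on a spectral complexity quantity or a combinatorial construction of disjoint clusters.

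For parts (a) and (b), the central step is to control
\[
S(\rho) := \sum_{i=2}^p \min\Big\{1,\frac{\rho^2}{\lambda_i^2}\Big\}
\]
for $\rho = p/[cp^\alpha(p-cp^\alpha)] \asymp p^{-\alpha}$. The key estimate I would prove is $S(\rho) = O(p^{1-\alpha}\log p)$. I would obtain it by splitting $S(\rho) = (N(\rho)-1) + \rho^2 \sum_{i : \lambda_i > \rho} \lambda_i^{-2}$, where $N(\rho) = \#\{i : \lambda_i \le \rho\}$, and bounding the right-hand sum by dyadic decomposition over bands $[2^k\rho, 2^{k+1}\rho)$, each of which contributes at most $\rho^2 N(2^{k+1}\rho)/(2^k\rho)^2 = 4^{-k}N(2^{k+1}\rho)$. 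The BBT eigenvalue counts $N(\lambda)$ can be estimated via the recursive self-similar structure of the tree, yielding a bound of the form $N(\lambda) = O(p\lambda \log p)$ at the scales of interest; this can be extracted from the spanning-tree wavelet analysis of \cite{sharpnack2012detecting,singh2010detecting}. Plugging $S(\rho) = O(p^{1-\alpha}\log p)$ into Corollary~\ref{cor:crit_SNR} gives $\mu/\sigma = \omega(p^{(1-\alpha)/4}(\log p)^{1/4})$, which is part (a); plugging the same estimate into the corollary following Theorem~\ref{thm:adaptive} contributes an additional $\sqrt{\log p}$ factor and produces the rate $\mu/\sigma = \omega(p^{(1-\alpha)/4}(\log p)^{1/2})$ of part (b).

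For part (c), I would invoke Theorem~\ref{thm:lower_bd}(b) with $\Ccal'$ taken to be the collection of all complete subtrees rooted at the nodes of depth $d^\star = \lfloor (\ell+1)(1-\alpha)\rfloor$ in a BBT of depth $\ell$, where $p = 2^{\ell+1}-1$. Each such subtree $C$ has size $|C| \asymp cp^\alpha$, is separated from the rest of the tree by cutting a single edge (so $\Wb(\partial C) = 1$), and the subtrees at a common depth are pairwise disjoint. The resulting cut sparsity $p\Wb(\partial C)/(|C||\bar C|)$ is at most $\rho/2$ once one absorbs constants into $c$, and the $|\Ccal'| = 2^{d^\star} \asymp p^{1-\alpha}$ disjoint subtrees cover essentially all of the vertex set, so that $c|\Ccal'|/p \to 1$. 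Theorem~\ref{thm:lower_bd}(b) then yields asymptotic indistinguishability whenever $\mu/\sigma = o(|\Ccal'|^{1/4}) = o(p^{(1-\alpha)/4})$.

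The main obstacle will be the eigenvalue-counting estimate $N(\lambda) = O(p\lambda\log p)$ for the BBT Laplacian. Unlike the torus, where the spectrum is explicit via the DFT, the BBT spectrum has no clean closed form, and while its self-similarity allows a recursive bound in principle, pushing the recursion through while preserving only a single $\log p$ factor is the delicate technical step that drives the $(\log p)^{1/4}$ and $(\log p)^{1/2}$ surplus over the matching lower bound in (c).
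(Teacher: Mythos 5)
Your skeleton matches the paper's own proof: parts (a) and (b) are reduced to Corollary~\ref{cor:crit_SNR} and the corollary following Theorem~\ref{thm:adaptive} via the single spectral estimate $\sum_{i\ge 2}\min\{1,\rho^2/\lambda_i^2\} = O(p^{1-\alpha}\log p)$, and your part (c) --- disjoint complete subtrees rooted at a common depth, each isolated by cutting one edge, with $|\Ccal'| \asymp p^{1-\alpha}$ and $c|\Ccal'|/p \to 1$ --- is exactly the intended application of Theorem~\ref{thm:lower_bd}(b) (the paper's appendix proves only the spectral estimate, but the text's remark that subtrees have cut size $1$ points to precisely your construction, and your verification of the $\rho/2$ sparsity and coverage conditions is sound).

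The one genuine gap is the eigenvalue-counting bound $N(\lambda) = O(p\lambda\log p)$, which you rightly flag as the crux but leave unproved: the spanning-tree wavelet references you point to do not contain it, and the delicate self-similarity recursion you anticipate is unnecessary, because the estimate is available off the shelf. The paper closes this step by combining two known results: Rojo's factorization of the BBT characteristic polynomial, $\det(\lambda \Ib - \Delta) = p_1^{2^{\ell-2}}(\lambda)\, p_2^{2^{\ell-3}}(\lambda)\cdots p_{\ell-1}(\lambda)\, s_\ell(\lambda)$ with $\deg p_i = i$, together with Molitierno et al.'s tight algebraic-connectivity bound, which shows the smallest root of $p_i$ is bounded below by roughly $2^{-i}$. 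Counting degrees then gives at most $\ell + (\ell-1) + (\ell-2)2 + \cdots + (\ell-j)2^{j-1} \le \ell 2^j$ eigenvalues below $\asymp 2^{j-\ell}$, which is exactly your claimed $N(\lambda)=O(p\lambda\log p)$ at dyadic scales; with that lemma in hand your band decomposition reproduces the paper's computation ($k + \rho^2\sum_{j}\ell\, 2^j\, 4^{\ell-j} = O(p^{1-\alpha}\log p)$ with $k \asymp \ell\, 2^{\ell(1-\alpha)}$) and the rest goes through. One minor arithmetic slip: relative to (a), the adaptive rate in (b) carries an extra $(\log p)^{1/4}$ factor, not $\sqrt{\log p}$, since $(\sqrt{S\log p})^{1/2} = S^{1/4}(\log p)^{1/4}$; the final rate $p^{(1-\alpha)/4}(\log p)^{1/2}$ you state is nonetheless correct.
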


The conclusion is that for the BBT the GFSS and the adaptive GFSS is near optimal with respect to critical SNR.
The proof (Appendix A) is based on the special form of the spectrum of the BBT.
So in this case, the GFSS consistently dominates the naive statistics and the theoretical results are very close to the lower bounds for any $\alpha$.

We simulate the probability of correct detection versus the probability of false alarm.
These are given for the four statistics in Figure \ref{fig1} 
as the test threshold, and hence the probability of false alarm, is varied. 
The GFSS is computed with the correct $\rho$, which is in general unknown.
Different statistics dominate under different choices of cluster size parameter, $\alpha$.
When $\alpha = 1$, corresponding to large clusters, where the size is on the same order as $p$, the aggregate statistic is competitive with the adaptive statistic.
When $\alpha = 0.5$, corresponding to clusters of size $\asymp p^{1/2}$, the aggregate becomes less competitive and the max more competitive than the $\alpha = 1$ case, and the GFSS remains the dominating test.
In each case, we set $c = 1/2$, which ensures that the $\alpha = 1$ case does not select the entire tree.


\begin{figure*}[!htbp]
\centering
\mbox{
\subfigure{\includegraphics[width=2.1in]{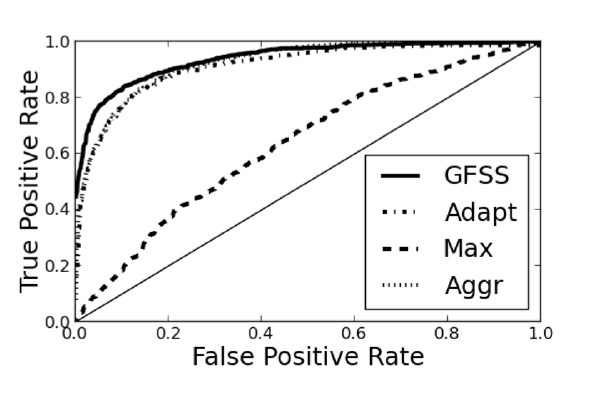}}
\subfigure{\includegraphics[width=2.1in]{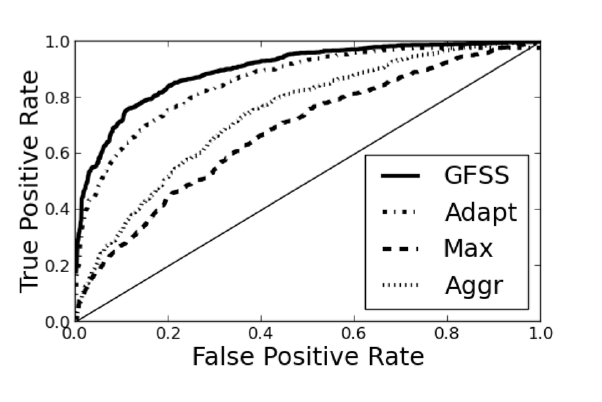}}
}
\caption{\small{\textbf{(BBT Comparisons)} Simulations of the size (false positive rate) and the power under $H_1^{PC}$ for the balanced binary tree of the GFSS, adaptive GFSS (Adapt), Max statistic (Max), and Aggregate statistic (Aggr).  The figures are for the tree of depth $\ell = 6$, $p = 2^{\ell + 1} - 1 = 127$, with choice of $\alpha = 1$ (left) and $\alpha = 0.5$ (right).}}
\label{fig1}
\end{figure*}

\subsection{Torus Graph}

The torus has been a pedagogical example, but it is also an important example as it models a mesh of sensors in two dimensions.
We will analyze the performance guarantees of the GFSS over our running example, the 2-dimensional torus graph with $\ell$ vertices along each dimension ($p = \ell^2$).
To include squares of size $p^{1 - \beta}$, as in the examples, then we would obtain $\rho \asymp p^{-(1 - \beta)/2}$.
The following result is due to a detailed analysis of the spectrum of the torus.

\begin{corollary}
\label{cor:torus}
Let $G$ be the $\ell \times \ell$ square torus ($p = \ell^2$), and let $\rho = c p^{-(1 - \beta)/2}$ for $\beta \in [0,1)$. \newline
(a) The GFSS can asymptotically distinguish $H_0$ from $H_1^{PC},H_1^S$ if the SNR satisfies
\[
\frac{\mu}{\sigma} = \omega ( p^{\frac{3}{20} + \frac{1}{10} \beta} ).
\]
(b) The adaptive GFSS can asymptotically distinguish the hypotheses of (a) if
\[
\frac{\mu}{\sigma} = \omega ( p^{\frac{3}{20} + \frac{1}{10} \beta} (\log p)^{1/4} ).
\]
(c) $H_0$ and $H_1^{PC}$ are asymptotically indistinguishable if the SNR is weaker than
\[
\frac{\mu}{\sigma} = o ( p^{\frac{\beta}{4}} ).
\]
\end{corollary}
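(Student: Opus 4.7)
The plan is to combine three ingredients: Corollary \ref{cor:crit_SNR} for part (a), the adaptive corollary following Theorem \ref{thm:adaptive} for part (b), and Theorem \ref{thm:lower_bd}(b) with the construction in Example 2 for part (c). The only substantive computation is estimating the spectral sum
\[
S(\rho) = \sum_{i=2}^p \min\Big\{1,\frac{\rho^2}{\lambda_i^2}\Big\}
\]
on the torus, which governs both (a) and (b).

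To estimate $S(\rho)$, I would begin from the explicit formula $\lambda_{i_1,i_2} = 4\bigl(\sin^2(\pi i_1/\ell) + \sin^2(\pi i_2/\ell)\bigr)$ and invoke Jordan's inequality $(2/\pi)x \le \sin x \le x$ for $x \in [0,\pi/2]$ to obtain the two-sided bound $\lambda_{i_1,i_2} \asymp (\tilde i_1^2 + \tilde i_2^2)/p$, where $\tilde i_j = \min(i_j,\ell - i_j)$. With this I would split $S(\rho)$ according to whether $\lambda_{i_1,i_2}\le\rho$ (contributing $1$) or $\lambda_{i_1,i_2}>\rho$ (contributing $\rho^2/\lambda_{i_1,i_2}^2$). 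The number of low modes is the count of integer lattice points in a 2D disk of radius $\asymp \sqrt{\rho p}$, which by Gauss's circle estimate is $\asymp \rho p$. The tail sum is controlled by a polar-coordinate Riemann-sum approximation, and because the density of states is constant near zero in two dimensions, it too is of the same order. Plugging $\rho = cp^{-(1-\beta)/2}$ into $S(\rho)^{1/4}$ via Corollary \ref{cor:crit_SNR} yields (a), and the adaptive corollary following Theorem \ref{thm:adaptive} costs an extra $(\log p)^{1/4}$, giving (b).

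For (c), I would apply Theorem \ref{thm:lower_bd}(b) with exactly the construction indicated in Example 2: let $\Ccal'$ consist of pairwise disjoint axis-aligned squares of side length $\asymp p^{(1-\beta)/2}$ tiling the torus. Each square has $|C|\asymp p^{1-\beta}$ vertices and $\asymp p^{(1-\beta)/2}$ boundary edges, giving cut sparsity $p\Wb(\partial C)/(|C||\bar C|)\asymp p^{-(1-\beta)/2}$, which can be made at most $\rho/2$ by choosing the constant $c$ large enough. The number of squares is $|\Ccal'|\asymp p^{\beta}$, and $|C||\Ccal'|/p\to 1$, so all hypotheses of Theorem \ref{thm:lower_bd}(b) hold and $\mu/\sigma = o(|\Ccal'|^{1/4}) = o(p^{\beta/4})$ implies indistinguishability.

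The main technical hurdle is the 2D lattice-point and tail-integral estimate for $S(\rho)$: the challenge is to ensure that the Riemann-sum approximation remains sharp in the transition region $\lambda_i\approx\rho$, where most of the contribution is concentrated, without absorbing a spurious logarithmic factor. A useful reduction is to upper-bound $\min\{1,\rho^2/\lambda^2\}$ by $(\rho/\lambda)^s$ for a suitable $s\in[0,2]$, which converts the problem into estimating $\sum \lambda_i^{-s}$ and can be handled directly from the density-of-states estimate; choosing $s$ to align the two regimes delivers the exponent claimed in (a) and (b).
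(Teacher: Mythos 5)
Your proposal is correct, and parts (b) and (c) follow the paper's own route: (b) is exactly the invocation of the adaptive corollary after Theorem \ref{thm:adaptive}, and (c) is the paper's tiling construction (Example 2 fed into Theorem \ref{thm:lower_bd}(b)) --- the only nit there is that $c$ is fixed by the corollary's hypothesis, so the constant you are free to enlarge is the side length of the squares, not $c$. Where you genuinely diverge is the estimate of $S(\rho)=\sum_{i\ge 2}\min\{1,\rho^2/\lambda_i^2\}$. The paper collapses to the one-dimensional index $i_1\vee i_2$ using $|\{(i_1,i_2):i_1\vee i_2=i\}|\le 2i$, approximates the tail by $\int_{k/\ell}^{1/2}x(1-\cos 2\pi x)^{-2}\,dx$, and balances at $k\approx\rho^{2/5}\ell$ to get $S(\rho)=O(\rho^{4/5}p)=O(p^{3/5+2\beta/5})$, which is where the exponent $\tfrac{3}{20}+\tfrac{\beta}{10}$ comes from. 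Your two-dimensional lattice-point count plus dyadic (polar) tail gives the sharper $S(\rho)\asymp\rho p=\Theta(p^{(1+\beta)/2})$: indeed, since $1-\cos(2\pi x)=2\sin^2(\pi x)\asymp x^2$ on $[0,1/2]$, the paper's own integral evaluates to $\asymp \ell^2/k^2$ rather than the $\ell^3/k^3$ it reports, and rebalancing at $k\asymp\rho^{1/2}\ell$ recovers exactly your $\rho p$; so your estimate is both valid and strictly better, and your worry about a spurious logarithm in the transition region is unfounded once the $\min$ is split dyadically (the log appears only in the lossy moment bound at $s=1$). One bookkeeping step you must make explicit, though: $(\rho p)^{1/4}=p^{\frac{1+\beta}{8}}$, which is \emph{not} the exponent $\tfrac{3}{20}+\tfrac{\beta}{10}$ claimed in (a); since $p^{\frac{3}{20}+\frac{\beta}{10}}=p^{\frac{1+\beta}{8}}\cdot p^{\frac{1-\beta}{40}}\ge p^{\frac{1+\beta}{8}}$ for $\beta\in[0,1)$, any SNR that is $\omega(p^{\frac{3}{20}+\frac{\beta}{10}})$ is also $\omega(S(\rho)^{1/4})$, so your sharper bound proves (a) and (b) a fortiori --- but as written your claim that the computation ``yields (a)'' silently conflates the two exponents. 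Alternatively, your fallback reduction $\min\{1,\rho^2/\lambda^2\}\le(\rho/\lambda)^s$ with $s=4/5$ gives $S(\rho)\le\rho^{4/5}\sum_{i\ge2}\lambda_i^{-4/5}=O(\rho^{4/5}p)$ via the constant density of states, reproducing the paper's exponent exactly. In short: the paper's 1D reduction is shorter but lossy; your approach shows the stated torus rate is an artifact of the analysis, which is consonant with the paper's own observation (Figure \ref{fig4}) that the GFSS empirically outperforms its torus bound at $\beta=.5$.
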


The implication of Cor.~\ref{cor:torus} is that when $\beta > 0$ (the clusters are not too large), the GFSS is consistent under an SNR lower than $p^{1/4}$.
Regardless of the $\beta$ parameter the GFSS never achieves the lower bound for the torus graph, which suggests an approach that exploits the specific structure of the torus may yet outperform the GFSS.
We simulate the performance of the test statistics over a $30 \times 30$ torus, with $\beta = 0,.5,.75$ with $c = 1/2$
When $\beta$ is small (large clusters), we suffer an additional factor of $p^{3(1-\beta)/20}$ in the upper bound.
Despite the theoretical shortcomings of in this case, the simulations (Figure \ref{fig3}) suggest that the GFSS is significantly superior to the naive tests for medium sized clusters.

\begin{figure*}[!htbp]
\centering
\mbox{
\subfigure{\includegraphics[width=2.1in]{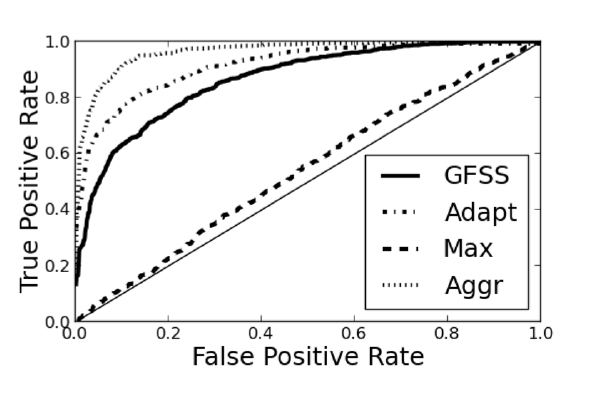}}
\subfigure{\includegraphics[width=2.1in]{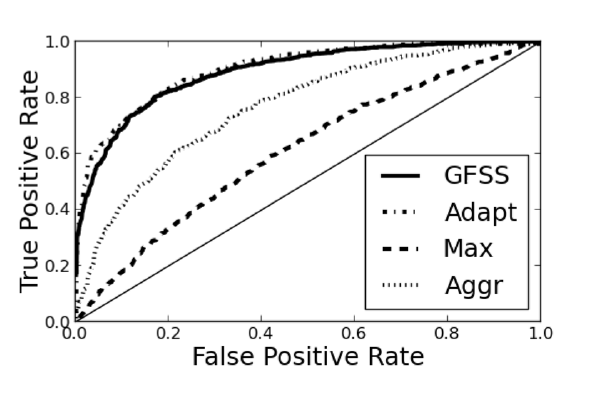}}
\subfigure{\includegraphics[width=2.1in]{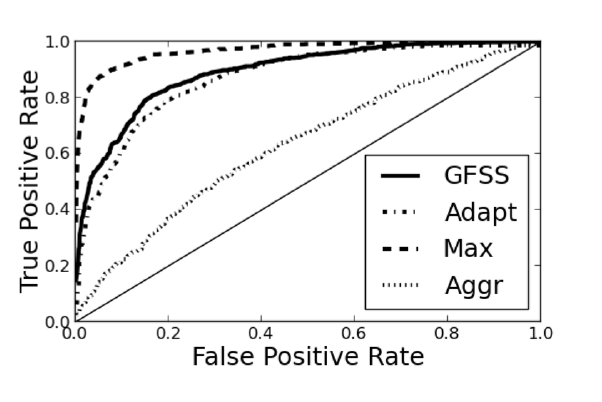}}
}
\caption{\small{\textbf{(Torus Comparisons)} Simulations of the size (false positive rate) and the power under $H_1^{PC}$ for the Torus of the GFSS, adaptive GFSS (Adapt), Max statistic (Max), and Aggregate statistic (Aggr).  The figures are for side length of $\ell = 30$, $p = \ell^2 = 900$, with choice of $\beta = 0$ (top left), $\beta = .5$ (top right) and $\beta = .75$ (bottom).}}
\label{fig3}
\end{figure*}

\subsection{Kronecker Graphs}

Much of the research in complex networks has focused on observing statistical phenomena that is common across many data sources.
The most notable of these are that the degree distribution obeys a power law (\cite{faloutsos1999power}) and networks are often found to have small diameter (\cite{milgram1967small}).
A class of graphs that satisfy these, while providing a simple modelling platform are the Kronecker graphs (see \cite{leskovec2007scalable,leskovec2010kronecker}).
Let $H_1$ and $H_2$ be graphs on $p_0$ vertices with Laplacians $\Delta_1, \Delta_2$ and edge sets $E_1, E_2$ respectively.
The Kronecker product, $H_1 \otimes H_2$, is the graph over vertices $[p_0] \times [p_0]$ such that there is an edge $((i_1,i_2),(j_1,j_2))$ if $i_1 = j_1$ and $(i_2,j_2) \in E_2$ or $i_2 = j_2$ and $(i_1,j_1) \in E_1$.
We will construct graphs that have a multi-scale topology using the Kronecker product.
Let the multiplication of a graph by a scalar indicate that we multiply each edge weight by that scalar.
First let $H$ be a connected graph with $p_0$ vertices.
Then the graph $G$ for $\ell > 0$ levels is defined as  
\[
\frac{1}{p_0^{\ell-1}} H \otimes \frac{1}{p_0^{\ell-2}} H \otimes ... \otimes \frac{1}{p_0} H \otimes H.
\]
The choice of multipliers ensures that it is easier to make cuts at the more coarse scale.
Notice that all of the previous results have held for weighted graphs.

\begin{corollary}
\label{cor:kron}
Let $G$ be the Kronecker product of the base graph $H$ described above with $p = p_0^\ell$ vertices, and let $\rho \asymp p_0^{2k - \ell - 1}$ (which includes cuts within the $k$ coarsest scale).\newline
(a) The GFSS can asymptotically distinguish $H_0$ from signals from $H_1^{PC},H_1^S$ if the SNR is stronger than
\[
\frac{\mu}{\sigma} = \omega ( p^{k/2\ell} (\textrm{diam}(H))^{1/4}),
\]
where $\textrm{diam}(H)$ is the diameter of the base graph $H$.\newline
(b) The adaptive GFSS can distinguish the hypotheses of (a) if
\[
\frac{\mu}{\sigma} = \omega ( p^{k/2\ell} (\textrm{diam}(H) \log p)^{1/4}).
\]
(c) $H_0$ and $H_1^{PC}$ are asymptotically indistinguishable if 
\[
\frac{\mu}{\sigma} = o( p^{k/4 \ell} ).
\]
\end{corollary}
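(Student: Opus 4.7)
I would reduce (a)--(c) to the two general corollaries following Theorem~\ref{thm:main} and to Theorem~\ref{thm:lower_bd}(b), and spend the effort on the two graph-specific ingredients: the explicit spectrum of $G$ and the construction of a disjoint family of coarse-scale clusters.

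\paragraph{Step 1: the spectrum of $G$.} The Cartesian graph product has Laplacian $\Delta_{G_1\square G_2} = \Delta_{G_1}\otimes I + I\otimes \Delta_{G_2}$, and scalar edge rescalings pass through the Laplacian. Applied recursively, the eigenvalues of $\Delta_G$ are indexed by tuples $\mathbf{i} = (i_1,\ldots,i_\ell)\in[p_0]^\ell$ and given by
\[
  \lambda_{\mathbf{i}} \;=\; \sum_{s=1}^\ell \frac{\nu_{i_s}}{p_0^{\ell-s}},
\]
where $0=\nu_1<\nu_2\le\cdots\le\nu_{p_0}$ are the eigenvalues of $\Delta_H$. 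I would use the standard facts $\nu_2 \gtrsim 1/\mathrm{diam}(H)$ (a Cheeger-type lower bound) and $\nu_{p_0} = O(1)$ (since $H$ has bounded maximum degree).

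\paragraph{Step 2: upper bounds (a) and (b).} By Corollary~\ref{cor:crit_SNR}, it suffices to bound $S(\rho) := \sum_{\mathbf{i}\ne \mathbf{1}}\min\{1,\rho^2/\lambda_{\mathbf{i}}^2\}$ with $\rho\asymp p_0^{2k-\ell-1}$. I would stratify tuples $\mathbf{i}$ by the coarsest active coordinate $t(\mathbf{i}) = \min\{s:i_s\neq 1\}$. For fixed $t$ the number of such tuples is $(p_0-1)\,p_0^{\ell-t}$, and
\[
  \frac{\nu_2}{p_0^{\ell-t}} \;\le\; \lambda_{\mathbf{i}} \;\le\; \sum_{s=t}^\ell \frac{\nu_{p_0}}{p_0^{\ell-s}} \;\lesssim\; \frac{1}{p_0^{\ell-t}}.
\]
The threshold $\lambda_{\mathbf{i}}\asymp\rho$ therefore corresponds to $t\asymp 2k$: the coarse scales $t\le 2k$ contribute saturated terms of order $1$, while the fine scales $t>2k$ contribute via $\rho^2/\lambda_{\mathbf{i}}^2 \lesssim \rho^2 p_0^{2(\ell-t)}\mathrm{diam}(H)^2$. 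Summing the two regimes (a geometric series in $t$ in each case) should give
\[
  S(\rho) \;\lesssim\; p_0^{2k}\,\mathrm{diam}(H) \;=\; p^{2k/\ell}\,\mathrm{diam}(H),
\]
which plugged into Corollary~\ref{cor:crit_SNR} yields (a). For (b) I apply verbatim the corollary immediately after Theorem~\ref{thm:adaptive}, absorbing the extra $\log p$ into the fourth root.

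\paragraph{Step 3: lower bound (c).} Apply Theorem~\ref{thm:lower_bd}(b) to the family
\[
  \Ccal' \;=\; \bigl\{ C_{v_0} := \{v_0\}\times[p_0]^{\ell-k} : v_0\in[p_0]^k \bigr\}.
\]
The sets are disjoint with common size $c=p_0^{\ell-k}$ and $c|\Ccal'|/p = 1$, so only the sparsity condition remains. Every cut edge out of $C_{v_0}$ sits at some coarse scale $s\le k$, costs weight $p_0^{s-\ell}$, and there are $O(p_0^{\ell-k})$ of them per coordinate $s$, giving $\Wb(\partial C_{v_0}) \lesssim p_0^{\ell-k}\sum_{s=1}^k p_0^{s-\ell} \asymp 1$, hence $p\,\Wb(\partial C_{v_0})/(|C_{v_0}||\bar C_{v_0}|) \lesssim p_0^{k-\ell} \le \rho/2$ for large $p_0$. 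Thus every $C_{v_0}\in\Ccal(\rho/2)$, and Theorem~\ref{thm:lower_bd}(b) gives indistinguishability whenever $\mu/\sigma = o(|\Ccal'|^{1/4}) = o(p_0^{k/4}) = o(p^{k/(4\ell)})$, establishing (c).

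\paragraph{Main obstacle.} The calculation in Step~2 is the only nontrivial one: one must separate the contribution of the $O(p_0^{2k})$ ``coarse'' tuples (which saturate at $1$) from the geometrically-decaying tail over finer scales, while keeping track of the fact that the smallest possible eigenvalue at scale $t$ is $\nu_2/p_0^{\ell-t}$ rather than $1/p_0^{\ell-t}$ --- it is precisely this Cheeger bound $\nu_2\gtrsim 1/\mathrm{diam}(H)$ that produces the $\mathrm{diam}(H)^{1/4}$ factor in (a) and (b). The lower bound in (c) is a clean combinatorial exercise once the disjoint coarse-block family is written down.
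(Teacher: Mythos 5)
Your plan reproduces the paper's architecture (product spectrum, then plug into Corollary~\ref{cor:crit_SNR} and its adaptive analogue, then Theorem~\ref{thm:lower_bd}(b) with a family of disjoint coarse blocks), and your Steps~1 and~3 are essentially sound --- Step~3 in particular is a correct fleshing-out of the construction the paper only sketches through its ``cuts at scale $k$'' discussion. The genuine gap is in Step~2, exactly where you locate the main work. With $\lambda_{\mathbf{i}}=\sum_{s=1}^{\ell}\nu_{i_s}p_0^{-(\ell-s)}$ the weights \emph{increase} in $s$, so the geometric series you quote, $\sum_{s=t}^{\ell}\nu_{p_0}p_0^{-(\ell-s)}$, is dominated by its $s=\ell$ term and is $\Theta(\nu_{p_0})=\Theta(1)$, not $O(p_0^{-(\ell-t)})$: a tuple whose \emph{coarsest} active coordinate is $t$ may also have its finest coordinate active, in which case $\lambda_{\mathbf{i}}=\Theta(1)$. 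Hence within your strata only the lower bound $\lambda_{\mathbf{i}}\ge\nu_2 p_0^{-(\ell-t)}$ is valid, and if you run the computation with it, the ``saturated'' regime $t\le 2k$ contains $\sum_{t\le 2k}(p_0-1)p_0^{\ell-t}=\Theta(p_0^{\ell-1})=\Theta(p)$ tuples (the sum is dominated by $t=1$, not $t=2k$). So stratifying by the coarsest active coordinate can only deliver the trivial bound $S(\rho)=O(p)$ --- no better than the energy statistic --- and the claimed $S(\rho)=O(p_0^{2k}\,\mathrm{diam}(H))$ does not follow from your bookkeeping.

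The repair is to stratify by the \emph{finest} active coordinate, $t^{*}(\mathbf{i})=\max\{s\colon i_s\neq 1\}$. Then the stratum count is $(p_0-1)p_0^{t-1}$, and since every active coordinate now has weight at most $p_0^{-(\ell-t)}$, the two-sided bound $\nu_2\,p_0^{-(\ell-t)}\le\lambda_{\mathbf{i}}\le 2\nu_{p_0}\,p_0^{-(\ell-t)}$ genuinely holds. The saturated strata ($t$ up to roughly $2k-1$, shifted by $\log_{p_0}\nu_2^{-1}$, which is how $\mathrm{diam}(H)$ enters) then contribute $\Theta(p_0^{2k})$, and the fine tail is a convergent geometric series dominated by its boundary term, recovering $S(\rho)=O(p^{2k/\ell}\,\mathrm{diam}(H))$ and hence parts (a) and (b). This is precisely the paper's argument in different clothing: the paper samples the index tuple uniformly at random, observes that the distance $Z$ of the finest active coordinate from the fine end is geometric with success probability $(p_0-1)/p_0$, and bounds $p_0^{-\ell}S(\rho)\le \EE_Z \min\{1,\rho^2 p_0^{2Z}/\nu_2^2\}=O((\nu_2+\nu_2^{-1})p_0^{2k-\ell-1})$. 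One further small slip: your Cheeger-type claim $\nu_2\ge c/\mathrm{diam}(H)$ is false in general (path graphs already violate it); the correct bound is $\nu_2\ge c/(p_0\,\mathrm{diam}(H))$, which is what the paper implicitly uses when it writes $O((\nu_2+\nu_2^{-1})p_0^{2k-\ell-1})=O(p_0^{2k-\ell}\,\mathrm{diam}(H))$ --- harmless here since $p_0$ is a fixed constant, but worth stating correctly because $\mathrm{diam}(H)$ is exactly the quantity you are tracking.
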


The proof and an explanation of $\rho$ is in the appendix.
The implication of Cor.~\ref{cor:kron} is that only for $k$ small is the GFSS nearly optimal.
Generally, one will suffer a multiplicative term of $p^{k/4\ell}$.
As we can see from the simulations the $k = 1$ case is exactly when the aggregate statistic dominates (see Figure \ref{fig5}).
When $1 < k < \ell$, the GFSS improves on the aggregate and the max statistics.
Throughout these simulations we set $\rho = p_0^{2k-\ell-1}$.

\begin{figure*}[!htbp]
\centering
\mbox{
\subfigure{\includegraphics[width=2.1in]{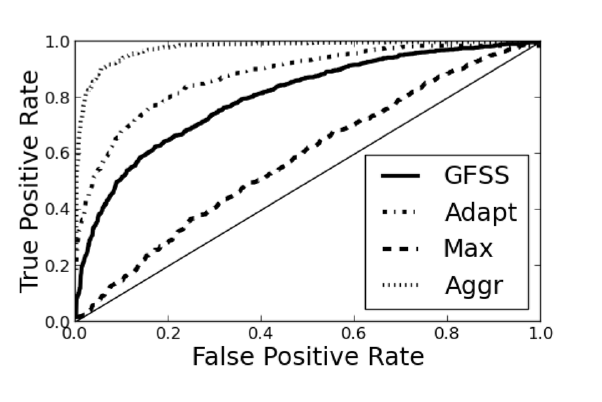}}
\subfigure{\includegraphics[width=2.1in]{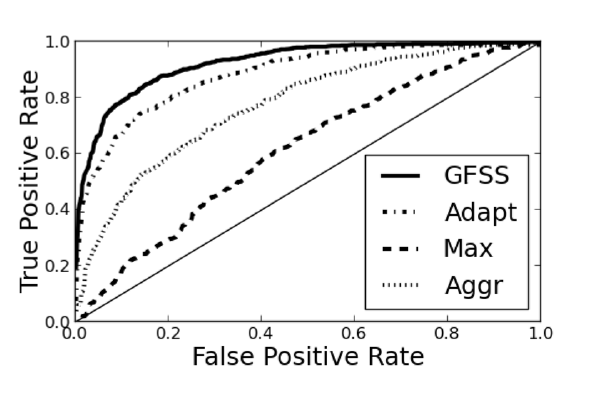}}
}
\caption{\small{\textbf{(Kronecker Comparison)} Simulations of the size (false positive rate) and the power under $H_1^{PC}$ for the Kronecker graph of the GFSS, adaptive GFSS (Adapt), Max statistic (Max), and Aggregate statistic (Aggr).  The figures are for a base graph of size $p_0 = 6$ and Kronecker power of $\ell = 3$, so $p = p_0^\ell = 216$.  The cuts were chosen at the coarsest scale, $k = 1$, (left) and at the second coarsest, $k = 2$ (right).}}
\label{fig5}
\end{figure*}


One may rightly ask if the gap between the upper bounds (Corollaries \ref{cor:BBT} (b), \ref{cor:torus} (b), \ref{cor:kron} (b)) and the lower bounds (Corollaries \ref{cor:BBT} (c), \ref{cor:torus} (c), \ref{cor:kron} (c)) is just due to a lack of theoretical know-how and the test is actually optimal.
We attempt to assess this concern by plotting the performance of the GFSS with the SNR increasing according to the scaling dictated by the upper bounds (Figure \ref{fig4}).
For the BBT because the curve does not change significantly with $p$ (as the tree depth $l$ increases), the upper bound is supposed to be tight.
In the torus graph, for large rectangles ($\beta = 0$) the upper bound appears to be correct, while for moderately sized rectangles ($\beta = .5$) there may be a gap between our theoretical bound, \ref{cor:torus} (b), and the actual performance of GFSS.
For the Kronecker graph there appears to be a gap for both scalings ($k=1$ and $k=2$) of cluster size, indicating that the performance of the GFSS may be better than predicted by our theory.

\begin{figure*}[!htbp]
\centering
\mbox{
\subfigure{\includegraphics[width=2.1in]{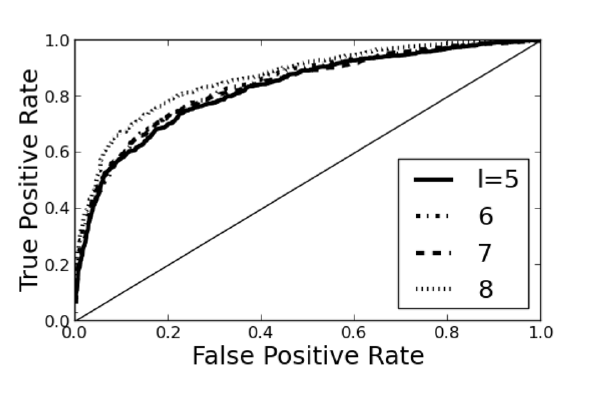}}
\subfigure{\includegraphics[width=2.1in]{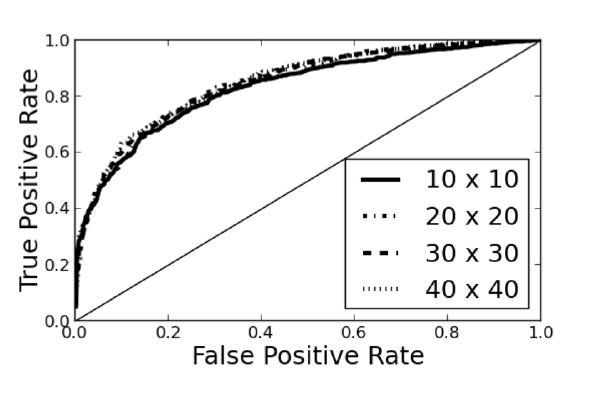}}
\subfigure{\includegraphics[width=2.1in]{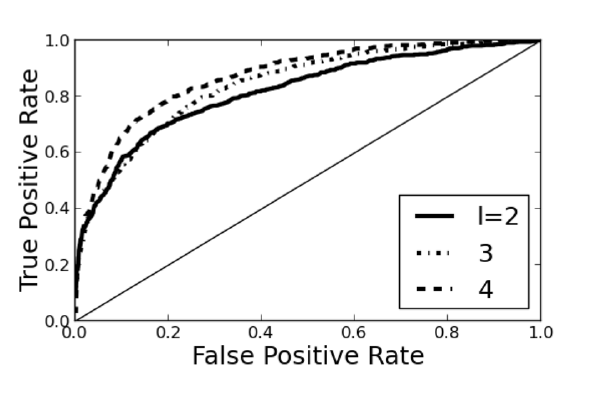}}
}
\mbox{
\subfigure{\includegraphics[width=2.1in]{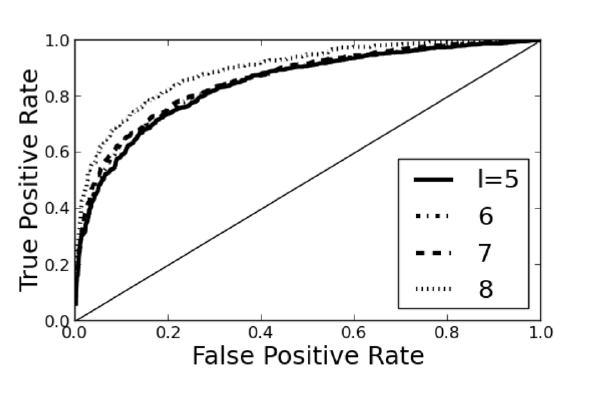}}
\subfigure{\includegraphics[width=2.1in]{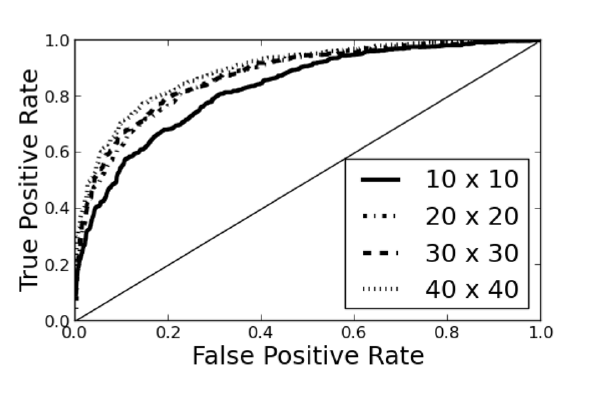}}
\subfigure{\includegraphics[width=2.1in]{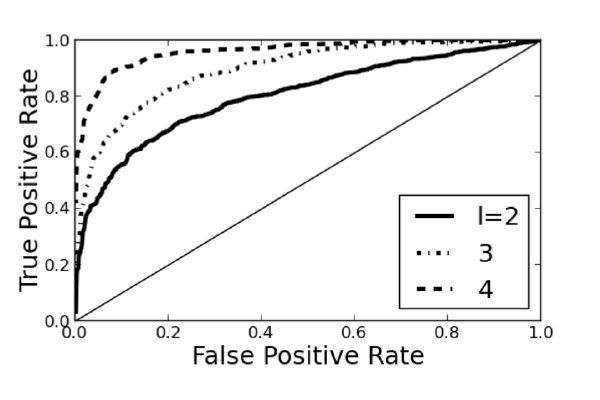}}
}
\caption{\small{\textbf{(Rescaling by Theoretical Bounds)} The size (false positive rate) and power (true positive rate) of the GFSS as $p$ increases for the following graph and signal models: BBT with $\alpha = 1$ (top left) and $\alpha = .5$ (bottom left); Torus with $\beta = 0$ (top middle) and $\beta=.5$ (bottom middle); Kronecker graph with base graph size $p_0 = 6$ and $k = 1$ (top right) and $k = 2$ (bottom right).  The SNR was allowed to scale according to Cor.~\ref{cor:BBT} (b) (left), Cor.~\ref{cor:torus} (b) (middle), Cor.~\ref{cor:kron} (b) (right).}}
\label{fig4}
\end{figure*}

\subsection{General Graph Structure, $H_1^S$}

The piecewise constant alternative hypothesis $H_1^{PC}$ is amenable to a sophisticated theoretical analysis and it motivates the GFSS.
Unfortunately, it is very easy to modify signals in $\Xcal_{PC}(\mu,\rho)$ by slight perturbations and find a signal that is outside our supposed class.
This lack of robustness is rightly alarming, and it is through the general graph structured class, $\Xcal_S(\mu,\rho)$, that we intended to include these perturbations.
We now provide a signal subsampling scheme that will demonstrate the performance of the GFSS under signal perturbations.

Suppose that we begin with a signal $\xb \in \Xcal_{PC}(\mu,\rho)$ such that $\xb = \delta \one_C$ and modify it in the following way: let $C' \subset C$ and make $\xb' \propto \one_C'$ such that $\xb' \in \Xcal_S(\mu,\rho)$.
We now determine the normalization that would make this so.
Notice that
\[
\left| \frac{\xb'^\top \one_C}{|C|} - \frac{\xb'^\top \one_{\bar C}}{|\bar C|} \right| \sqrt{\frac{|C||\bar C|}p} \ge \mu.
\]
Hence, $\xb' = \delta' \one_{C'}$ implies that $\delta' = \delta |C| / |C'|$ is sufficient.
So for the subsampled signal $\xb'$ to remain in $\Xcal_S(\mu,\rho)$ we will need to boost the signal by a factor of $|C|/|C'|$.
Figure \ref{fig7} shows the performance curves for the GFSS when the signal cluster $C'$ is formed by including each vertex in $C$ according to independent Bernoulli($q$) random variables for the BBT.
To make the comparison fair we boost the signal according to the above formulation.
As one can see the subsampling does not make the performance worse.

\begin{figure*}[!htbp]
\centering
\mbox{
\subfigure{\includegraphics[width=2.1in]{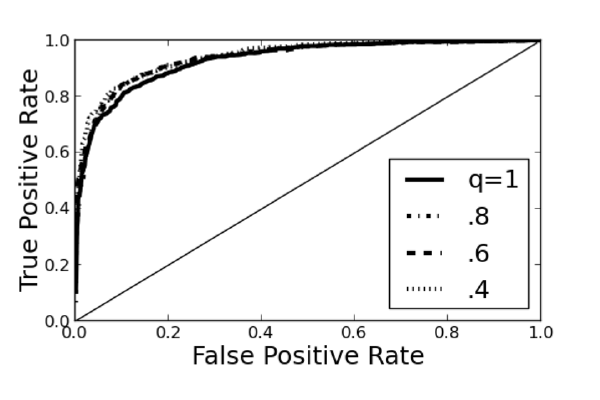}}
\subfigure{\includegraphics[width=2.1in]{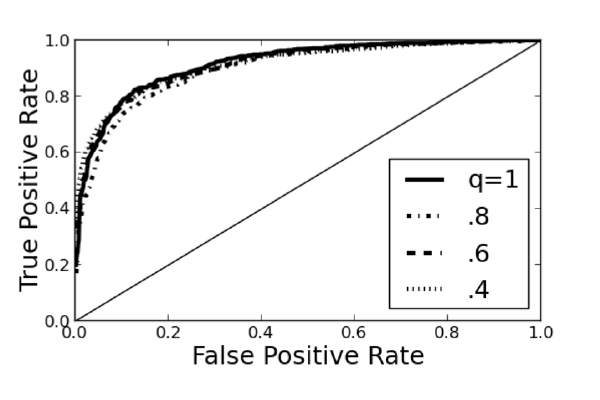}}
}
\caption{\small{\textbf{(BBT Perturbations)} Simulations of the size (false positive rate) and the power under $H_1^{S}$ for the balanced binary tree of the GFSS with changing cluster sampling probability, $q$.  The figures are for the tree of depth $\ell = 6$, $p = 2^{\ell + 1} - 1 = 127$, with choice of $\alpha = 1$ (left) and $\alpha = 0.5$ (right).}}
\label{fig7}
\end{figure*}

\section{Conclusion}

We studied the problem of how to tractably detect anomalous activities in networks under Gaussian noise.
We outlined what is known regarding the performance of the agglomerative and maximum statistics.
These statistics do not take into account the graph structure and we wish instead to exploit the structure of the graph to obtain a superior test statistic.
To this end we developed the graph Fourier scan statistic, suggesting it as a computationally feasible alternative to the GLRT.
We completely characterized the performance of the GFSS for any graph in terms of the spectrum of the combinatorial Laplacian.
The theoretical analysis of the GFSS naturally led to the development of the adaptive GFSS.

We applied the main result to three graph models: balanced binary trees, the lattice and Kronecker graph.
We demonstrated that the performance is not specific to piecewise constant signals, and we are able to extend our results to a more general alternative class, $H_1^S$.
We demonstrated that though the theoretical performance of the GFSS for the Torus graph and Kronecker graph may be sub-optimal, there is experimental evidence to indicate that this is partly an artifact of the theoretical analysis technique.
We see that not only is it statistically sub-optimal to ignore graph structure, but in many of these cases the GFSS gives a near optimal performance.


\bibliographystyle{alpha}
\bibliography{biblio}

\newpage
\appendix
\appendix
\section{Proofs}

The result now follows by considering all the indicator functions corresponding to the sets in $\mathcal{C}$. 

\begin{proof}[Proof of Proposition \ref{prop:GFSS_def}]
    To prove the claim we will first rewrite the SSS in an equivalent but more
    convenient form which we will then bound from above and below using the GFSS. 
 To this end we recall the arguments from Lemma 7 of \cite{sharpnack2012changepoint}.
 Since $G$ is connected, the combinatorial Laplacian $\Delta$ is symmetric, its
 smallest eigenvalue is zero and the remaining eigenvalues are positive. By the
 spectral theorem, we can write $\Delta = \Ub \Lambda \Ub^\top$, where $\Lambda$ is
 a $(p-1) \times (p-1)$ diagonal matrix containing the positive eigenvalues of
 $\Delta$, $\lambda_2,\ldots, \lambda_p$, in increasing order. The columns of the
 $p \times (p-1)$ matrix $\Ub$ are the associated eigenvectors.
Then, since each vector $\xb \in \mathbb{R}^p$ with ${\bf1}^\top \xb = 0$ can be written as $\Ub \zb$ for a unique vector $\zb \in \mathbb{R}^{p-1}$, we have
\[
\begin{array}{rcl}
\mathcal{X} & = & \{ \xb \in \mathbb{R}^p \colon \xb^\top \Delta \xb \leq \rho, \xb^\top \xb =1, {\bf1}^\top \xb \leq 0\}\\
& = & \{ \Ub \zb \colon \zb \in \mathbb{R}^{p-1}, \\
&&\quad \zb^\top \Ub^\top \Delta \Ub \zb \leq \rho, \zb^\top \Ub^\top \Ub\zb \leq1\} \\
& = & \{ \Ub \zb \colon \zb \in \mathbb{R}^{p-1}, \frac{1}{\rho} \zb^\top \Lambda \zb \leq 1, \zb^\top \zb \leq1\}, \\
\end{array}
\]
where in the third identity we have used the fact that $\Ub^\top \Ub =
\Ib_{p-1}$. Letting $\mathcal{Z} = \{ \zb \in \mathbb{R}^{p-1} \colon
\frac{1}{\rho} \zb^\top \Lambda \zb \leq 1, \zb^\top \zb \leq 1 \}$, we see that
the SSS can be equivalently expressed as
\begin{equation}\label{eq:shat.simplified}
    \sqrt{\widehat{s}} =  \sup_{\xb \in \mathcal{X}} \xb^\top \yb = \sup_{\zb \in
    	\mathcal{Z}} \zb^\top \Ub^\top \yb. 
    \end{equation}

Next, let  $\Ab = \frac{1}{\rho} \Lambda = \textrm{diag}\{ a_i\}_{i=1}^{p-1}$,
where $a_i = \lambda_{i+1}/\rho$, for $i=1,\ldots,p-1$.
If $\zb \in \mathbb{R}^{p-1}$ satisfies  $\| \zb \| \le 1$ and $\zb^\top \Ab \zb
\le 1$, then
\[
    \sum_{i =1}^p \max\{1, a_i\} z_i^2 \le \| \zb \|^2 + \zb^\top \Ab \zb \le 2.
    \]
Similarly, if $\sum_{i =1}^p \max\{1, a_i\} z_i^2  \le 1$, then we must have $
\max \left\{ \|
\zb \|, \zb^\top \Ab \zb \right\} \le 1$ as well. 
Now let $\Ab'$ be the $(p-1)$-dimensional diagonal  matrix with entries $\max \{1,
a_i \}$, $i=1,\ldots,p-1$ and set $\mathcal{Z}_1 = \{ \zb \in \mathbb{R}^{p-1}
\colon \zb^\top \Ab' \zb \leq 1 \}$ and  $\mathcal{Z}_2 = \{ \zb \in \mathbb{R}^{p-1}
\colon \zb^\top \Ab' \zb \leq 2 \}$.  Thus we have shown  that 
\[
\mathcal{Z}_1 \subset \mathcal{Z} \subset \mathcal{Z}_2. 
    \]
Using \eqref{eq:shat.simplified}, the previous inclusions imply the following
bounds on the square root of the SSS:
\[
    \sup_{\zb \in \mathcal{Z}_1} \zb^\top \Ub^\top \yb \leq
    \sqrt{\widehat{s}} \leq \sup_{\zb \in \mathcal{Z}_2} \zb^\top \Ub^\top 
    \yb 
    \]
which in turn are equivalent to the bounds
\[
    \sup_{\{ \zb \in \mathbb{R}^p \colon \zb^\top \Ub \Ab' \Ub^\top \zb \le 1 \}}  \yb^\top \zb \le \sqrt{\hat s}
    \le \sup_{\{ \zb \in \mathbb{R}^p \colon \zb^\top \Ub \Ab' \Ub^\top \zb \le 2\}}  \yb^\top \zb,
\]
since every $\zb \in \mathbb{R}^{p-1}$ can be written as $\Ub^\top \zb$ for some
$\zb \in \mathbb{R}^p.$\footnote{In fact, $\zb = \Ub^\top \zb_1 =  \Ub^\top \zb_2
$ if and only if the difference $\zb_1 - \zb_2$ belongs to the linear  subspace of
$\mathbb{R}^p$ spanned by the constant vectors.}
 
All that remains is to show that 
\[
\hat t = \sup_{\{ \zb \in \mathbb{R}^p \colon \zb^\top \Ub \Ab' \Ub^\top \zb \le 1 \}} \yb^\top \zb.
\]
This can be seen by strong duality for convex programs,
\[\begin{aligned}
\sup_{\{ \zb \in \mathbb{R}^p \colon \zb^\top \Ub \Ab' \Ub^\top \zb \le 1 \}} \yb^\top \zb = \sup_{\{ \zb \in \mathbb{R}^p \colon \zb^\top \Ab' \zb \le 1 \}} (\Ub^\top \yb)^\top \zb \\
=  \sup_{\{ \zb \in \mathbb{R}^{p-1} \}} \inf_{\eta \ge 0} (\Ub^\top \yb)^\top \zb - \eta (\zb^\top \Ab' \zb - 1)\\
= \inf_{\eta \ge 0} \sup_{\{ \zb \in \mathbb{R}^{p-1} \}} (\Ub^\top \yb)^\top \zb - \eta (\zb^\top \Ab' \zb - 1).
\end{aligned}
\]
The solution to the maximization problem is $\zb = (2 \eta \Ab')^{-1} (\Ub^\top \yb)$, and plugging this in it becomes
\[\inf_{\eta \ge 0} (\Ub \yb)^\top (4 \eta \Ab')^{-1} (\Ub^\top \yb) + \eta
\]
which is minimized at 
\[
\eta = \sqrt{ (\Ub \yb)^\top (4 \Ab')^{-1} (\Ub \yb) }.
\]
Plugging this in completes our proof.
\vspace{-.25cm}
\end{proof}
\vspace{-.1cm}

\begin{proof}[Proof of Theorem \ref{thm:main}]
We will use the following lemma regarding the concentration of $\chi^2$ random variables.
\begin{lemma}[\cite{laurent2000adaptive}]
\label{lem:chi_squared}
Let for $i \in \{2,\ldots,p \}$, $a_i \ge 0$ and $\{X_i \}_{i = 1}^p$ be independent $\chi^2_1$ random variables. 
Define $Z = \sum_{i = 1}^p a_i (X_i - 1)$
\[
\begin{aligned}
\PP \{ Z \ge 2 \| \ab \|_2 \sqrt{x} + 2 \| \ab \|_\infty x \} \le e^{-x} \\
\PP \{ Z \le - 2 \| \ab \|_2 \sqrt{x} \} \le e^{-x} 
\end{aligned}
\]
\end{lemma}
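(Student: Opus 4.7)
My plan is to prove both tail bounds via a Chernoff--Cramér argument, beginning with the moment-generating function (MGF) of a centered $\chi^2_1$ random variable. If $X \sim \chi^2_1$, then $\mathbb{E}[e^{t(X-1)}] = e^{-t}(1-2t)^{-1/2}$ for $t < 1/2$, so the log-MGF is $\psi(t) = -t - \tfrac{1}{2}\log(1-2t)$. Expanding $-\tfrac{1}{2}\log(1-2t) = t + \sum_{k\ge 2}(2t)^k/(2k)$, I get $\psi(t) = \sum_{k\ge 2}(2t)^k/(2k)$, and since $1/(2k) \le 1/4$ for $k \ge 2$, this is dominated term by term by $\sum_{k\ge 2}(2t)^k/4 = t^2/(1-2t)$. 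By independence of the $X_i$, summing over the weighted coordinates yields
\[
\log \mathbb{E}[e^{tZ}] \;=\; \sum_{i=1}^p \psi(ta_i) \;\le\; \sum_{i=1}^p \frac{(ta_i)^2}{1 - 2ta_i} \;\le\; \frac{t^2 \|\ab\|_2^2}{1 - 2t\|\ab\|_\infty},
\]
valid on $t \in [0, 1/(2\|\ab\|_\infty))$.

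For the upper tail I would apply Markov's inequality to $e^{tZ}$ to obtain $\PP(Z \ge u) \le \exp(-tu + t^2\|\ab\|_2^2/(1-2t\|\ab\|_\infty))$ and then choose $t$ so that the right-hand side equals $e^{-x}$ at the target deviation $u = 2\|\ab\|_2 \sqrt{x} + 2\|\ab\|_\infty x$. The right ansatz is the Bennett--Bernstein choice $t = \sqrt{x}/(\|\ab\|_2 + 2\|\ab\|_\infty \sqrt{x})$. With this $t$, a short algebraic check gives $1 - 2t\|\ab\|_\infty = \|\ab\|_2/(\|\ab\|_2 + 2\|\ab\|_\infty \sqrt{x})$, and both $tu$ and $t^2\|\ab\|_2^2/(1-2t\|\ab\|_\infty)$ collapse to simple rational expressions whose difference simplifies to exactly $x$.

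The lower tail is substantially easier because no MGF singularity is encountered. The log-MGF of $-(X-1)$ at $t \ge 0$ is $f(t) = t - \tfrac{1}{2}\log(1+2t)$, and $f(0) = f'(0) = 0$ with $f''(t) = 2/(1+2t)^2 \le 2$, so Taylor's theorem gives $f(t) \le t^2$. Consequently $\log \mathbb{E}[e^{-tZ}] \le t^2 \|\ab\|_2^2$, and the resulting Chernoff bound $\exp(-tu + t^2\|\ab\|_2^2)$ is minimized at $t = u/(2\|\ab\|_2^2)$, giving $\PP(Z \le -u) \le \exp(-u^2/(4\|\ab\|_2^2))$, which equals $e^{-x}$ at $u = 2\|\ab\|_2 \sqrt{x}$.

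The only delicate step is the upper-tail optimization: one must guess the Bennett--Bernstein parametrization rather than differentiating in $t$ blindly, which would produce a quadratic with unpleasant roots. Once the ansatz is written down, however, the verification reduces to a one-line identity, and the mixed sub-Gaussian/sub-exponential form $2\|\ab\|_2 \sqrt{x} + 2\|\ab\|_\infty x$ of the deviation reveals itself as precisely the scale for which the bound $t^2\|\ab\|_2^2/(1-2t\|\ab\|_\infty)$ is tight.
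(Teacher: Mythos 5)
Your proof is correct, and I verified the key algebra: with $t = \sqrt{x}/(\|\ab\|_2 + 2\|\ab\|_\infty\sqrt{x})$ one gets $1-2t\|\ab\|_\infty = \|\ab\|_2/(\|\ab\|_2+2\|\ab\|_\infty\sqrt{x})$ and indeed $tu - t^2\|\ab\|_2^2/(1-2t\|\ab\|_\infty) = x$ exactly at $u = 2\|\ab\|_2\sqrt{x}+2\|\ab\|_\infty x$; the lower-tail bound $f(t)\le t^2$ from $f''(t)=2/(1+2t)^2\le 2$ is likewise sound. There is nothing in the paper to compare against: the lemma is imported verbatim from Laurent and Massart (2000) with no proof given, and your Chernoff--Cram\'er argument --- the series bound $\psi(t)\le t^2/(1-2t)$ on the centered $\chi^2_1$ log-MGF (valid for $t\in[0,1/2)$, where the term-by-term domination you use requires $t\ge 0$, which your range correctly enforces), the aggregated bound $t^2\|\ab\|_2^2/(1-2t\|\ab\|_\infty)$, and the Bernstein-type inversion --- is essentially the original proof from that reference, so you have in effect made the paper's cited black box self-contained. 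Two trivial caveats, neither your fault: the upper-tail inequality degenerates when $\ab = \mathbf{0}$ (then $Z=0$ almost surely and the left side is $1$), so one should read the lemma with $\ab \neq \mathbf{0}$; and the index set $\{2,\ldots,p\}$ versus the sum from $i=1$ in the statement is a typo in the paper itself, which your proof correctly ignores by working with a generic nonnegative weight vector.
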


Recall the notation of the proof of Prop.~\ref{prop:GFSS_def}.
The probability of error under the null, \eqref{eq:null_control}, follows from Lemma \ref{lem:chi_squared}.
Consider any of the alternatives, then $\hat t$ can be written,
\[
\begin{aligned}
&\hat t = \yb^\top \Ub (\Ab')^{-1} \Ub^\top \yb - \tr (\Ab')^{-1} \\
&= \xb^\top \Ub (\Ab')^{-1} \Ub^\top \xb + 2 \xb^\top \Ub (\Ab')^{-1} \Ub^\top \epsilonb \\
&\quad + \epsilonb^\top \Ub (\Ab')^{-1} \Ub^\top \epsilonb- \tr (\Ab')^{-1}\\
&\overset{d}{=} \xb^\top \Ub (\Ab')^{-1} \Ub^\top \xb + 2 \xb^\top \Ub (\Ab')^{-1} \epsilonb + \epsilonb^\top (\Ab')^{-1} \epsilonb- \tr (\Ab')^{-1}\\
\end{aligned}
\]
where $\overset{d}{=}$ denotes equality in distribution (which follows from rotational invariance of the isonormal Gaussian).
By Gaussian concentration, with probability at least $1 - \alpha$,
\[
\xb^\top \Ub (\Ab')^{-1} \epsilonb \ge - \sqrt{2 \xb^\top \Ub (\Ab')^{-2} \Ub^\top \xb \log (1 / \alpha)}
\]
Because $\Ub (\Ab')^{-1}\Ub^\top$ is positive definite with eigenvalues bounded by $1$, we have that $\xb^\top \Ub (\Ab')^{-2} \Ub^\top \xb \le \xb^\top \Ub (\Ab')^{-1} \Ub^\top \xb$.
We will now show that $\xb^\top \Ub (\Ab')^{-1} \Ub^\top \xb \ge \mu^2 / 2$ under $H^{PC}_1, H^S_1$.
Recall that by the dual norm (as derived in the proof of Prop.~\ref{prop:GFSS_def}), 
\begin{equation}\label{eq:dual_ellipse}
\xb^\top \Ub \Ab' \Ub^\top \xb = \sup_{\zb^\top \Ub (\Ab')^{-1} \Ub^\top \zb \le 1} (\zb^\top \xb)^2.
\end{equation}

{\em Case 1: $H_1^{PC}$.}  In this case,
\[
\frac{(\xb - \bar\xb)^\top}{\| \xb - \bar \xb\|} \Ub \Lambda \Ub^\top \frac{(\xb - \bar\xb)}{\|\xb - \bar \xb\|} \le \rho
\]
while $\| (\xb - \bar \xb) / \| \xb - \bar \xb \| \| = 1$. 
Thus,
\[
\frac{(\xb - \bar\xb)^\top}{\| \xb - \bar \xb\|} \Ub \Ab' \Ub^\top \frac{(\xb -
\bar\xb)}{\| \xb - \bar \xb\|} \le 2.
\]
So,
\[
\frac{(\xb - \bar\xb)^\top}{\sqrt 2 \| \xb - \bar \xb \|} \Ub \Ab' \Ub^\top \frac{(\xb - \bar\xb)}{\sqrt 2 \| \xb - \bar \xb \|} \le 1
\]
By substituting $z = (\xb - \bar \xb) / \sqrt 2 \|\xb - \bar \xb\|$ in \eqref{eq:dual_ellipse} we arrive at
\[
\Rightarrow \xb^\top \Ub (\Ab')^{-1} \Ub^\top \xb \ge \left(\frac{(\xb - \bar\xb)^\top}{\sqrt 2 \| \xb - \bar \xb \|} \xb \right)^2 = \| \xb - \bar \xb\|^2 / 2 \ge \mu^2 / 2,
\]
where the last inequality is due to the fact that $\xb \in \Xcal_{PC}(\mu,\rho)$.

{\em Case 2: $H_1^S$.}  Let $\xb \in \Xcal_S(\mu,C)$.  
In this case we will let $\Kb_C$ be the projection onto the span of $\one_C, \one_{\bar C}$ and orthogonal to $\one$.
So, \[
\Kb_C \xb = \frac{\one_C^\top \xb}{|C|} \one_C + \frac{\one_{\bar C}^\top \xb}{|\bar C|} \one_{\bar C}- \bar \xb.
\]
Let $\zb = \Kb_C \xb / \| \Kb_C \xb \|$ that $\zb^\top \xb = \| \Kb_C \xb \|$.
Let 
\[
\bar \xb_C = \frac{\one_C^\top \xb}{|C|} \one_C \quad {\rm and }\quad \bar \xb_{\bar C} = \frac{\one_{\bar C}^\top \xb}{|\bar C|} \one_{\bar C}.
\]
\[
\begin{aligned}
\bar \xb_C - \bar \xb = (\frac{1}{|C|} - \frac 1n) \one_C^\top \xb - \frac 1n \one_{\bar C}^\top \xb \\
= \frac {|\bar C|} n (\bar \xb_C - \bar \xb_{\bar C}).
\end{aligned}
\]
Similarly, $\bar \xb_{\bar C} - \bar \xb = \frac {|C|} n (\bar \xb_{\bar C} - \bar \xb_{C})$.
And so,
\[
\begin{aligned}
\zb^\top \xb = \|\Kb_C \xb\| = |C| \frac {|\bar C|^2} {n^2} (\bar \xb_C - \bar \xb_{\bar C})^2 + |\bar C| \frac {|C|^2} {n^2} (\bar \xb_{\bar C} - \bar \xb_{C})^2 \\
= \frac{|C||\bar C|}{n} (\bar \xb_{\bar C} - \bar \xb_{C})^2 \ge \mu^2.
\end{aligned}
\]
Now we can go through the same proof as the previous case substituting $\zb$ for $\xb - \bar \xb / \|\xb - \bar \xb\|$.

The error bound, \eqref{eq:alt_control} follows from these facts and the Lemma \ref{lem:chi_squared} applied to $\epsilonb^\top (\Ab')^{-1} \epsilonb - \tr (\Ab')^{-1}$.
\vspace{-.25cm}
\end{proof}
\vspace{-.1cm}

\begin{proof}[Proof of Corollary \ref{cor:BBT}]
The study of the spectra of trees really began in earnest with the work of \cite{fiedler1975eigenvectors}.
Notably, it became apparent that trees have eigenvalues with high multiplicities, particularly the eigenvalue $1$.
\cite{molitierno2000tight} gave a tight bound on the algebraic connectivity of balanced binary trees (BBT).
They found that for a BBT of depth $\ell$, the reciprocal of the smallest eigenvalue ($\lambda_2^{(\ell)}$) is 
\begin{equation}
\label{eqn:tree_eig_bound}
\begin{aligned}
\frac{1}{\lambda_2^{(\ell)}} \le 2^\ell - 2\ell + 2 - \frac{2^\ell - \sqrt{2} (2\ell -1 - 2^{\ell-1})}{2^\ell - 1 - \sqrt 2 (2^{\ell - 1} - 1)} \\
+ (3 - 2 \sqrt 2 \cos (\frac{\pi}{2\ell - 1}))^{-1} \\
\le 2^\ell + 105 I\{ \ell < 4 \}
\end{aligned}
\end{equation}
\cite{rojo2002spectrum} gave a more exact characterization of the spectrum of a balanced binary tree, providing a decomposition of the Laplacian's characteristic polynomial.
Specifically, the characteristic polynomial of $\Delta$ is given by
\begin{equation}
\label{eqn:tree_char_poly}
\begin{aligned}
\det (\lambda \Ib - \Delta) = p_1^{2^{\ell - 2}}(\lambda) p_2^{2^{\ell - 3}}(\lambda) \\
... p_{\ell - 3}^{2^2}(\lambda) p_{\ell - 2}^2(\lambda) p_{\ell - 1}(\lambda) s_\ell(\lambda)  
\end{aligned}
\end{equation}
where $s_\ell(\lambda)$ is a polynomial of degree $\ell$ and $p_i(\lambda)$ are polynomials of degree $i$ with the smallest root satisfying the bound in \eqref{eqn:tree_eig_bound} with $\ell$ replaced with $i$.
In \cite{rojo2005spectra}, they extended this work to more general balanced trees.

By \eqref{eqn:tree_char_poly} we know that at most $\ell + (\ell - 1) + (\ell - 2)2 + ... + (\ell - j)2^{j - 1} \le \ell 2^j$ eigenvalues have reciprocals larger than $2^{\ell - j} + 105 I\{ j < 4 \}$.
Let $k = \max \{ \lceil \frac{\ell}{c} 2^{\ell (1 - \alpha)} \rceil, 2^3\}$, then we have ensured that at most $k$ eigenvalues are smaller than $\rho$.
For $n$ large enough
\[
\begin{aligned}
\sum_{i > 1} \min\{1, \rho^2 \lambda_i^{-2}\} \le k + \rho^2 \sum_{j > \log k}^{\ell} \ell 2^j 2^{2(\ell - j)} \\
\le k + \frac \ell k n^2 \rho^2 = O(n^{1-\alpha} \log n)   
\end{aligned}
\]
\vspace{-.25cm}
\end{proof}
\vspace{-.1cm}

\begin{proof}[Proof of Cor.~\ref{cor:torus}]
By a simple Fourier analysis (see \cite{sharpnack2010identifying}), we know that the Laplacian eigenvalues are $2 (2 - \cos (2 \pi i_1/\ell) - \cos (2 \pi i_2 / \ell))$ for all $i_1,i_2 \in [\ell]$.
Let us denote the $\ell^2$ eigenvalues as $\lambda_{(i_1,i_2)}$ for $i_1, i_2 \in [\ell]$.
Notice that for $i \in [\ell]$, $|\{(i_1,i_2) : i_1 \vee i_2 = i \}| \le 2 i$.
For simplicity let $\ell$ be even.
We know that if $i_1 \vee i_2 \le \ell/2$ then $\lambda_{(i_1,i_2)} = 2 - \cos(2 \pi i_1/ \ell) - \cos(2 \pi i_2 / \ell) \ge 1 - \cos(2 \pi (i_1 \vee i_2) / \ell)$ . Let $k \ll \ell$ which we will specify later.  Thus,
\[
\begin{aligned}
&\sum_{(i_1,i_2) \ne (1,1) \in [\ell]^2} 1 \wedge \frac{\rho^2}{\lambda_{(i_1,i_2)}^2} \\
&\le 2 \sum_{i \in [\ell/2]} 2 i \left( 1 \wedge \frac{\rho^2}{ (1 - \cos(2 \pi i / \ell))^2 } \right) \\
&\le 4 \sum_{i = 1}^k i + \rho^2 \frac{\ell^2}{2} \frac 2\ell \sum_{k < i \le \ell/2} 2 \frac{i / \ell}{ (1 - \cos(2 \pi i / \ell))^2 }\\
&\le 4 k^2 + \rho^2 \frac{\ell^2}{2} \int_{k/\ell}^{1/2} \frac{x dx}{(1 - \cos(2 \pi x))^2}\\
& = 4 k^2 + \rho^2 \frac{\ell^2}{2} \left(\frac{1}{4\pi^4 }\frac{\ell^3}{k^3} + O\left(\frac \ell k\right)\right)
\end{aligned}
\]
The above followed by the Taylor expansion about $0$ of the integral.
Let us choose $k$ to such that $k \approx \rho^{2/5} \ell$.
The inequalities above hold regardless of the choice of $k$, as long $k \ll \ell$, so we have the freedom to tune it to our liking.
Plugging this in we obtain,
\[
\sum_{(i_1,i_2) \ne (1,1) \in [\ell]^2} 1 \wedge \frac{\rho^2}{\lambda_{(i_1,i_2)}^2} = O(\rho^{4/5} \ell^{2}) = O(p^{3/5  + 2\beta/5})
\]
\vspace{-.25cm}
\end{proof}
\vspace{-.1cm}

\begin{proof}[Proof of Corollary \ref{cor:kron}]
The Kronecker product of two matrices $\Ab, \Bb \in \RR^{n \times n}$ is defined as $\Ab \otimes \Bb \in \RR^{(n \times n) \times (n \times n)}$ such that $(\Ab \otimes \Bb)_{(i_1,i_2),(j_1,j_2)} = A_{i_1,j_1} B_{i_2,j_2}$.
Some matrix algebra shows that if $H_1$ and $H_2$ are graphs on $p$ vertices with Laplacians $\Delta_1, \Delta_2$ then the Laplacian of their Kronecker product, $H_1 \otimes H_2$, is given by $\Delta = \Delta_1 \otimes \Ib_p + \Ib_p \otimes \Delta_2$ (\cite{merris1998laplacian}).
Hence, if $\vb_1, \vb_2 \in \RR^p$ are eigenvectors, viz.~$\Delta_1 \vb_1 = \lambda_1 \vb_1$ and $\Delta_2 \vb_2 = \lambda_2 \vb_2$, then $\Delta (\vb_1 \otimes \vb_2) = (\lambda_1 + \lambda_2) \vb_1 \otimes \vb_2$, where $\vb_1 \otimes \vb_2$ is the usual tensor product.
This completely characterizes the spectrum of Kronecker products of graphs.

We should argue the choice of $\rho \asymp p^{2k - \ell - 1}$, by showing that it is the results of cuts at level $k$.
We say that an edge $e = ((i_1,...,i_\ell),(j_1,...,j_\ell))$ has scale $k$ if $i_k \ne j_k$.
Furthermore, a cut has scale $k$ if each of its constituent edges has scale at least $k$.
Each edge at scale $k$ has weight $p^{k - \ell}$ and there are $p^{\ell-1}$ such edges, so cuts at scale $k$ have total edge weight bounded by 
\[
p^{\ell - 1} \sum_{i = 1}^k p^{i - \ell} = p^{k - 1} \frac{p - \frac{1}{p^{k-1}}}{p - 1} \le \frac{p^k}{p - 1}
\]
Cuts at scale $k$ leave components of size $p^{\ell - k}$ intact, meaning that $\rho \propto p^{2k - \ell - 1}$ for large enough $p$. 

We now control the spectrum of the Kronecker graph.
Let the eigenvalues of the base graph $H$ be $\{\nu_j \}_{j=1}^p$ in increasing order.
The eigenvalues of $G$ are precisely the sums
\[
\lambda_i = \frac{1}{p^{\ell-1}} \nu_{i_1} + \frac{1}{p^{\ell-2}} \nu_{i_2} + ... + \frac{1}{p} \nu_{i_{\ell-1}} + \nu_{i_\ell}
\]
for $i = (i_j)_{j = 1}^\ell \subseteq V$.
The eigenvalue distribution $\{ \lambda_i \}$ stochastically bounds 
\[
\lambda_i \ge \sum_{j = 1}^\ell \frac{1}{p^{\ell-j}} \nu_2 I\{\nu_{i_j} \ne 0\} \ge \frac{\nu_2}{p^{Z(i)}}
\]
where $Z(i) = \min \{j : \nu_{i_{\ell - j}} \ne 0\}$.
Notice that if $i$ is chosen uniformly at random then $Z(i)$ has a geometric distribution with probability of success $(p - 1)/p$.
Hence,
\[
\begin{aligned}
&\frac{1}{p^\ell}\sum_{i \in V^\ell} \min\{1 , \frac{\rho^2}{\lambda^2_i}\} \le \EE_Z \min\{1 , \frac{\rho^2 p^{2 Z}}{\nu_2^2}\} \\
& \le \PP_Z \{Z \ge 2 k - \ell - 1 + \log_p \nu_2\} \\
&+ \frac{1}{\nu_2^2}\sum_{z = 1}^{\lfloor \ell + 1 - 2k + \log_p \nu_2 \rfloor} p^{2(2 k - \ell - 1 + z)} \PP_Z \{Z = z\}\\
& \le p^{ 2k - \ell - 1 + \log_p \nu_2} \\
&+ \frac{1}{\nu_2^2} \sum_{z = 1}^{\lfloor \ell + 1 - 2k + \log_p \nu_2 \rfloor} p^{2(z + 2k - \ell - 1)} \frac 1{p^z} \frac{p - 1}{p} \\
& = O((\nu_2 + \nu_2^{-1}) p^{2k - \ell - 1}) = O(p^{2k - l} \textrm{diam}(H))
\end{aligned}
\]
where $\textrm{diam}(H)$ is the diameter of the base graph $H$.  Hence, 
\[
\sum_{i \in V^\ell} \min\{1 , \frac{\rho^2}{\lambda^2_i}\} = O(n^{2k/\ell} \textrm{diam}(H))
\]
\vspace{-.25cm}
\end{proof}
\vspace{-.25cm}

\section{The LR Statistic}
Below we will provide the details for the derivation of the LR
statistic \eqref{eq:LR} for testing the null hypothesis that $\xb = \bar \xb$
versus the alternative hypothesis 
\[
    \xb = \alpha \one + \delta \one_C, : \alpha, \delta \in
    \RR, \delta \neq 0, 
\]
for one given non-empty $C \subset V$. The unknown parameter $\alpha$ is  a nuisance
parameter.

To eliminate the dependence on $\alpha$ and simplify the
problem we will resort to invariant testing theory \cite{lehmann2005testing}. In fact, the testing problem remains invariant under the action of
the group of translations,  i.e.  additions of constant vectors, of
the mean of $\yb$. To take
advantage of such invariance we proceed as follows.  Let $\Bb$ be a $(p-1) \times p$ whose rows form an orthonormal basis for
$\mathcal{R}^\bot(\one)$, the linear subspace of $\mathbb{R}^P$ orthogonal to the subspace of vectors in $\mathbb{R}^p$ with constant
entries (the matrix $\Ub^\top$ as defined in the proof of Prop.~\ref{prop:GFSS_def} would suffice). Then, a maximal invariant with respect with respect to such a group is the $(p-1)$-dimensional random vector
\[
\zb := \Bb \yb = \Bb \one_C \delta + \Bb \epsilonb.
    \]
Since $\Bb \Bb^\top =
    \Ib_{p-1}$,  $\zb$ has a $N_{p-1}(\Bb \one_C \delta, \sigma^2 \Ib_{p-1})$
distribution, which no longer depends on the nuisance parameter $\alpha$.
Our hypothesis testing problem is then equivalent to the problem of testing $H_0
\colon \mathbb{E}[\zb] = 0$ versus the alternative $H_1^C \colon \mathbb{E}[\zb] =
\delta \Bb \one_C$. It is also worth pointing out that, as our calculations below show,
the choice of the orthonormal basis of $\mathcal{R}^\bot(\one)$ comprising the
rows of the matrix $\Bb$ does not matter in the construction of the
optimal test.


The LR statistic is
\[
    \frac{\sup_{\delta \in \mathbb{R}} \exp \left\{ -\frac{1}{2 \sigma^2} \left\|\zb - \Bb \one_C \delta
\right\|^2  \right\} }{ \exp\left\{
    - \frac{1}{2 \sigma^2} \| \zb \|^2
\right\}}.
    \]
    Simple calculations yield that MLE of $\delta$ under the alternative is $\frac{\zb^\top \Bb\one_C}{\| \Bb
    \one_C \|^2}$. As a result, the LR becomes
    \[
	\exp\left\{ -\frac{1}{2 \sigma^2} \left[ \left\| \zb - \Bb\one_C \frac{\zb^\top
	\Bb\one_C}{\| \Bb\one_C \|^2} \right\|^2  - \| \zb \|^2 \right]
    \right\},
    \]
which is equal to 
\begin{equation}\label{eq:LR2}
    \exp\left\{ \frac{1}{2 \sigma^2} \frac{( \zb^\top\Bb\one_C)^2 }{\| \Bb\one_C\|^2 }
\right\}.
    \end{equation}
We now rewrite the previous display in a simpler form. We have 
\[
\zb^\top \Bb \one_C = \yb^\top \Bb \Bb^\top \one_C = \one_C^\top \Kb \yb = \sum_{i \in C} \tilde y_i,
\]
where $\Kb = \Ib_p - \frac{\one \one^\top}{p}$ is the orthogonal projector into the subspace of
$\mathbb{R}^p$ orthogonal to the linear subspace spanned by the constant vectors.  
Next, since $\Kb$ is idempotent, we have 
\begin{eqnarray*}
\| \Bb \one_C \|^2 & = & \one_C^\top \Kb \one_C = \|  \Kb \one_C\|^2\\
& = & \sum_{i \in C} \left( 1 - \frac{|C|}{p} \right)^2 + \sum_{i \in \bar C} \left( - \frac{|\bar C|}{p} \right)^2 \\
& = \frac{|C||\bar C|}{p},\\
\end{eqnarray*}
where in last
equality we used the fact that $|C| + |\bar C| = p$. Plugging into
\eqref{eq:LR2},
we arrive at the expression for the log-likelihood ratio in \eqref{eq:LR}.

\end{document}